\documentclass[12pt,letterpaper]{amsart}
\usepackage{url}
\usepackage{fullpage}
\usepackage{pslatex} 
\usepackage{amsmath}
\usepackage{amssymb}
\usepackage{amsthm}
\usepackage{color}
\usepackage{array}
\usepackage{xy}
\usepackage{setspace}
\usepackage{multicol}
\usepackage{hyperref}
\usepackage[pdftex]{graphicx}

\newcommand{\id}{\operatorname{id}}

\newtheoremstyle{slanted}
{3pt}
{3pt}
{\slshape}
{}
{\bfseries}
{.}
{.5em}
{}
\theoremstyle{slanted}
\newtheorem{thm}{Theorem}[section]
\newtheorem{lem}[thm]{Lemma}
\newtheorem{prop}[thm]{Proposition}

\newtheorem{defn}[thm]{Definition}
\theoremstyle{remark}
\newtheorem{rem}[thm]{Remark}

\begin{document}

\date{} \title[New Results on Doubly Adjacent Pattern-Replacement
  Equivalences]{New Results on Doubly Adjacent Pattern-Replacement
  Equivalences} \author{William Kuszmaul} \maketitle

\begin{abstract}
In this paper, we consider the family of pattern-replacement
equivalence relations referred to as the "indices and values adjacent"
case. Each such equivalence is determined by a partition $P$ of a
subset of $S_c$ for some $c$. In 2010, Linton, Propp, Roby, and West
posed a number of open problems in the area of pattern-replacement
equivalences. Five, in particular, have remained unsolved until now,
the enumeration of equivalence classes under the $\{123,
132\}$-equivalence, under the $\{123, 321\}$-equivalence, under the
$\{123, 132, 213\}$ equivalence, and under the $\{123, 132, 213,
321\}$-equivalence. We find formulas for three of the five
equivalences and systems of representatives for the equivalence
classes of the other two. We generalize our results to hold for all
replacement partitions of $S_3$, as well as for an infinite family of
other replacement partitions. In addition, we characterize the
equivalence classes in $S_n$ under the $S_c$-equivalence, finding a
generalization of Stanley's results on the $\{12, 21\}$-equivalence.

To do this, we introduce a notion of confluence that often allows one
to find a representative element in each equivalence class under a
given equivalence relation. Using an inclusion-exclusion argument, we
are able to use this to count the equivalence classes under
equivalence relations satisfying certain conditions.
\end{abstract}

\newpage
\section{Introduction}

In this paper, we consider the family of pattern-replacement
equivalence relations referred to as the "indices and values adjacent"
case in \cite{LPRW}. Each such equivalence is determined by a
partition $P$ of a subset of $S_c$ for some $c$. Before introducing
our results, we provide background definitions.

\begin{defn}
 A \textbf{word} is a series of \textbf{letters}, where each
 \textbf{letter} has an integer value.
\end{defn}

\textbf{Example:} The word $13264$ has letters $1$, $3$, $2$, $6$, and
$4$.

\begin{defn}
 A \textbf{Permutation} of size $n$ is an $n$-letter word using each
 letter from $1$ to $n$ exactly once.
\end{defn}

\textbf{Example:} $14235$ is a permutation of size $5$.

\begin{defn}
$S_n$ denotes the set of permutations of size $n$.
\end{defn}

\textbf{Example:} $S_3 = \{ 123, 132, 213, 231, 312, 321 \}$.

\begin{defn} 
The word $w=w_1w_2\cdots w_c$ \textbf{forms the permutation}
$u=u_1u_2\cdots u_c \in S_c$ if for some integer $k$, $$w_1w_2\cdots
w_c = (u_1+k)(u_2+k) \cdots (u_c+k).$$
\end{defn}

As shorthand, we may just say that $w$ \textbf{forms} $u$.

\textbf{Example:} $7968$ forms the permutation $2413$. An even simpler
example is that $2413$ forms the permutation $2413$.

\begin{defn} 
A \textbf{replacement partition} is a partition of a subset of $S_c$
for some $c$.  Each replacement partition $P$ determines an
equivalence relation which we refer to as the
\textbf{$P$-equivalence}.
\end{defn}

\begin{defn} 
A \textbf{hit} is a contiguous subword of a permutation that forms $u$
for some $u \in S_c$. A \textbf{$P$-hit} is a contiguous subword of a
permutation that forms a permutation in a partition $P$ of $S_c$.
\end{defn}

\textbf{Example:} The subword $7968$ of the permutation $157968324$
forms the permutation $2413$.

\begin{defn} 
Given a pattern-replacement partition $P$, we define the
$P$-equivalence on $S_n$ in the following manner. Given a $P$-hit $h$
in a permutation $w$ that forms the permutation $t \in P$, we are
allowed to rearrange the letters within $h$ in any way such that the
resulting word forms some $t'$ in the same part of $P$ as $t$. A
\textbf{class}, or \textbf{equivalence class}, containing $w$ is the
set of permutations that can be reached from $w$ by repeated
rearrangements of the type just described. If two permutations $a$ and
$b$ are in the same equivalence class, we say that $a$ is
\textbf{equivalent} to $b$, or that $a\equiv b$.
\end{defn}

\begin{defn} 
Let $P$ be a replacement partition. Let $h$ be a $P$-hit in a
permutation $w$ such that $h$ forms a permutation in the $i$th part of
$P$. Rearranging the letters in $h$ in any way such that $h$ still
forms a permutation in the $i$th part of $P$ is referred to as a
\textbf{$P$-rearrangement}.
\end{defn}

\textbf{Example:} Let $w=1432657$. Then applying a $\{123, 213,
321\}\{312, 231\}$-rearrangement to the hit $432$ allows us to
rearrange the hit either as $234$ or as $324$. This results in the
permutations $1234657$ and $1324657$ respectively.

\begin{defn} 
The \textbf{$P$-equivalence} is the reflexive-transitive closure of
$P$-rearrangement. In other words, the permutations that can be
reached from a permutation $w$ by means of repeated $P$-rearrangements
are considered \textbf{equivalent} to $w$ under the
$P$-equivalence. The set of permutations equivalent to $w$ comprises
an \textbf{equivalence class}.
\end{defn}

\textbf{Example:} In Figure \ref{fig1} we show a visual depiction of the equivalence
class containing $123456$ under the $\{123, 321\}$-equivalence. Each
of the above permutations are considered \textbf{equivalent} under the
relation. Two permutations are connected by a line segment if one can
be reached from the other by the rearrangement of a single $\{123,
321\}$-hit. For example, $125436$ connects to $123456$ because $543$
forms the permutation $321$ and thus can be rearranged to form the
permutation $321$, becoming $345$.

\begin{figure}\label{fig1}
\begin{center}
\includegraphics[scale=1.3]{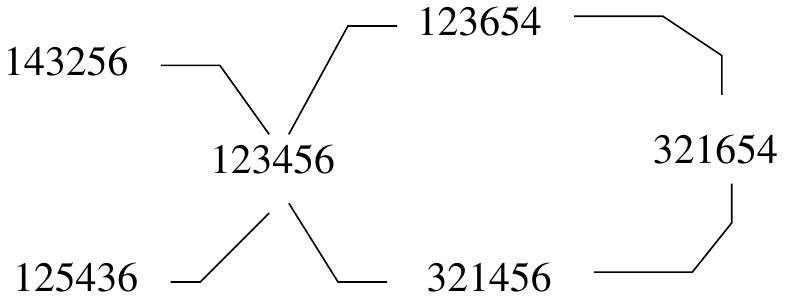}
\end{center}
\caption{Equivalence class containing $123456$ under the $\{123, 321\}$-equivalence.}
\end{figure}

We will refer to the type of equivalence relations studied in this
project as \textbf{pattern-replacement} equivalence relations. In the
literature, they are referred to either as doubly adjacent
pattern-replacement equivalence relations or as the ``indices and
values adjacent'' case.

In 2010, Linton, Propp, Roby, and West posed a number of open problems
in the area of pattern-replacement equivalences \cite{LPRW}. Five, in
particular, have remained unsolved until now, the enumeration of
equivalence classes under the $\{123, 132\}$-equivalence, under the
$\{123, 321\}$-equivalence, under the $\{123, 132, 213\}$ equivalence,
and under the $\{123, 132, 213, 321\}$-equivalence.

In Section \ref{secapplications}, we find formulas for three of the
five equivalences and systems of representatives for the equivalence
classes of the other two. We generalize our results to hold for all
replacement partitions of $S_3$, as well as for an infinite family of
other replacement partitions.

In order to do this, we first introduce the notion of
$(P,C)$-confluence in Section \ref{secconfluence}; when
$(P,C)$-confluence is satisfied by a partition $P$, one can easily
find a set of representative permutations for the equivalence classes
in $S_n$ under the $P$-equivalence. We construct a set of tools
allowing us to often quickly prove that a pattern-replacement
equivalence satisfies this confluence. Finally, we use the prior
results in the section to present a formula for the number equivalence
classes in $S_n$ under pattern-replacement equivalences satisfying
certain conditions.

In Section \ref{secSc}, we use our results to completely characterize
the equivalence classes in $S_n$ under the $S_c$-equivalence. Our
results connect interestingly to Stanley's results on the $\{12,
21\}$-equivalence \cite{S12}.

For this entire paper, fix $n$ and $c$ be positive integers such that
$c \leq n$. We will be considering equivalence classes of $S_n$ under
doubly adjacent pattern-replacement equivalences involving patterns of
size $c$.

Before continuing, we provide some useful background on the notion of
a binary relation being \textbf{confluent}.

\begin{defn}
Let $\rightarrow$ be a binary relation on a finite set $A$ and
$\rightarrow'$ be the reflexive-transitive closure of
$\rightarrow$. We say that $\rightarrow$ is \textbf{confluent} if
\begin{itemize}
\item there is no $a,b \in A$ such that $a\rightarrow' b$ and $b
  \rightarrow' a$;
\item for each connected component $C$ of $\rightarrow$ considered as
  a directed graph (with $A$ as the set of nodes), there is a unique
  minimal node $m \in C$. (i.e., there is a unique node $m \in C$ such
  that for all $c \in C$ with $c \neq m$, we have that $c \rightarrow
  ' m$.)
\end{itemize}
\end{defn}

The following lemma, known as the Diamond Lemma, is well known, and is
easy to prove.

\begin{lem}
Let $\rightarrow$ be a binary relation on a finite set $A$ such that
$a \rightarrow a$ is false for $a \in A$. Assume the following is
true.
\begin{enumerate}
\item There does not exist an infinite (possibly repeating) sequence
  $a_1, a_2, \ldots$ such that for all $i$, $a_i \in A$, and $a_i
  \rightarrow a_{i+1}$. In other words, $\rightarrow$
  \textbf{terminates}.
\item For all $a,b,c \in A$ such that $a \rightarrow b$ and $a
  \rightarrow c$, there exists $d \in A$ such that $b \rightarrow ' d$
  and $c \rightarrow ' d$ where $\rightarrow '$ denotes the
  reflexive-transitive closure of $\rightarrow$.
\end{enumerate}
Then, $\rightarrow$ is confluent.
\end{lem}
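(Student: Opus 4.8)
The plan is to prove the Diamond Lemma in two parts, matching the two bullet points in the definition of confluence, using the two hypotheses (termination and local confluence) respectively. Throughout, I will write $\to'$ for the reflexive-transitive closure of $\to$.

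First I would establish the acyclicity condition: there is no $a,b \in A$ with $a \to' b$ and $b \to' a$ for $a \neq b$. Suppose such a pair existed. Then concatenating the two $\to$-paths witnessing $a \to' b$ and $b \to' a$ yields a nontrivial closed walk $a \to \cdots \to b \to \cdots \to a$, which can be traversed repeatedly to produce an infinite sequence $a_1, a_2, \ldots$ with each $a_i \to a_{i+1}$. This contradicts hypothesis (1), that $\to$ terminates. (The case $a = b$ is handled by noting that $a \to a$ is false and that a nontrivial path from $a$ back to $a$ again gives an infinite sequence.)

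The main work is the second bullet: each connected component $C$ of the directed graph has a unique minimal node. The key tool is \textbf{Newman's Lemma}, the standard statement that a terminating, locally confluent relation is confluent in the sense that any two elements reachable from a common source have a common successor. I would prove this by \emph{well-founded} (Noetherian) induction on the relation $\to$, which is legitimate precisely because hypothesis (1) guarantees no infinite descending chains. Explicitly, say $a$ is \textbf{confluent} if for all $b, c$ with $a \to' b$ and $a \to' c$ there is $d$ with $b \to' d$ and $c \to' d$; I would show every element is confluent by induction, assuming every element strictly below $a$ is confluent. Given $a \to' b$ and $a \to' c$, if either path is trivial the claim is immediate; otherwise factor them as $a \to b_1 \to' b$ and $a \to c_1 \to' c$. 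Apply hypothesis (2) (local confluence) to the one-step divergence $a \to b_1$, $a \to c_1$ to obtain a common $e$ with $b_1 \to' e$ and $c_1 \to' e$. Then since $b_1$ and $c_1$ are strictly below $a$, the induction hypothesis lets me merge $b$ with $e$ through $b_1$, and merge that result with $c$ through $c_1$, producing the desired common descendant $d$.

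Finally, I would deduce the unique-minimal-node statement from global confluence together with termination. Termination guarantees that from any node one can reach at least one minimal (sink) node by following $\to$-edges until none remain. For uniqueness, fix a component $C$ and suppose $m$ and $m'$ are both minimal. Picking any common ancestor is not immediate, so I would instead argue along a path in the underlying \emph{undirected} connectivity of $C$: it suffices to show any two nodes $x, y$ with a common ancestor $a$ have the same set of reachable sinks, which follows directly from global confluence, and then extend this across the component by induction on the length of an undirected path connecting $m$ and $m'$. The main obstacle I anticipate is exactly this last step of passing from the directed notion of confluence to a statement about the \emph{undirected} connected components $C$ named in the definition; care is needed to confirm that two nodes joined by a zigzag of forward and backward edges nonetheless share a unique sink, which again reduces to repeated application of global confluence at each ``peak'' of the zigzag.
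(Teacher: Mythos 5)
Your proposal is correct, but there is nothing in the paper to compare it against: the paper explicitly declines to prove this lemma, stating only that it ``is well known, and is easy to prove,'' and then uses it as a black box. What you have written out is the standard rewriting-theory argument --- acyclicity from termination, Newman's Lemma by well-founded (Noetherian) induction in Huet's style, and then a Church--Rosser-type induction on undirected zigzags to handle connected components. It is worth emphasizing that this last step, which you flag as the main obstacle, is genuinely needed here and is not part of the textbook statement of Newman's Lemma: the paper's definition of ``confluent'' is nonstandard in that it demands a unique minimal node in each \emph{weakly} connected component (a node reachable from every other node of the component), together with acyclicity of $\rightarrow'$. Your plan bridges that gap correctly: termination plus global confluence gives that every node reaches exactly one sink, nodes with a common ancestor reach the same sink, induction on the length of an undirected path (resolving each peak with global confluence) propagates this across the whole component, and uniqueness of the minimal node then follows from the acyclicity you established first, since two such nodes would have to reach each other. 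In short, your proposal supplies precisely the argument the paper leaves implicit, and does so without gaps.
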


\section{General Results on a New Type of Confluence}\label{secconfluence}

In this section, we introduce the notion of a replacement partition
$P$ being $(P,C)$-confluent. When a partition is $(P,C)$-confluent, it
is easy to characterize a set of permutations in $S_n$ exactly one of
which is in a given equivalence class in $S_n$ under the
$P$-equivalence. Such a permutation is referred to as a $(P,C)$-root
permutation. We provide three results which together allow one to
often determine quickly that a pattern-replacement equivalence
satisfies $(P,C)$-confluence (Proposition \ref{propsimpleconf},
Theorem \ref{thmcombineconf}, Theorem \ref{thmomni}). We then present
a formula for the number of equivalence classes in $S_n$ under the
$P$-equivalence when $P$ is $(P,C)$-confluent and satisfies certain
conditions (Theorem \ref{thmconfcount}).

\begin{defn}  
Let $w$ be a word, each letter of which has a distinct value. The
\textbf{tail size} of $w$ is the smallest positive integer $k$ such
that the first $k$ letters of $w$ contain the $k$ smallest letter
values in $w$.
\end{defn}

\textbf{Example:} The tail size of $14238576$ is $4$ because $1423$
contains $1$, $2$, $3$, and $4$.

\begin{defn}
Let $w$ be a word, each letter of which has a distinct value.
\begin{itemize}
\item $w$ is \textbf{right leaning} if the tail size of $w$ is less
  than $n$.
\item $w$ is \textbf{left leaning} if the tail size of a written
  backwards version of $w$ is less than $n$.
\item $w$ is \textbf{omni leaning} if it is neither left leaning nor
  right leaning.
\end{itemize}
\end{defn}

\textbf{Example:} $14238576$ is right leaning because its tail size is
$4$. Consequently, writing it backwards to get $67583241$ gives us a
left leaning permutation.

\textbf{Example:} One can check that $4213$ is omni leaning.

The sets of right leaning, left leaning, and omni leaning permutations
in $S_n$ respectively are denoted by $R_n$, $L_n$, and $O_n$. Note
that the three sets do not intersect.



\begin{defn}
Let $w\in S_n$, and $h$ be a hit in $w$. If $h$ comprises all of $w$
we say $w$ is \textbf{left polarized}. Otherwise, let $a$ be the
average value of the letters in $h$, and let $w'$ be $w$ except with
the entire hit $h$ replaced by a single letter whose value is $a$. Let
$b$ be the letter in $w'$ closest to $a$ in value out of the (either
one or two) letters immediately adjacent to $b$ in $w'$. If there is a
tie, than pick the one on $b$'s left, although one can check it does
not matter which we pick.

If $a$ and $b$ are in increasing order in $w'$, then $h$ is
\textbf{right polarized}. If $a$ and $b$ are in decreasing order in
$w'$, then $h$ is \textbf{left polarized}.
\end{defn}

\textbf{Example:} The hit $456$ in the permutation $w=1456237$ is left
polarized. Here, $a=\frac{4+5+6}{3}=5$ and thus $w'=15237$. The two
letters immediately adjacent to $a$ in $w'$ are $1$ and $2$. Since $2$
is closer to $5$ than is $1$, $b=2$. In $w'$, $a$ and $b$ are in
decreasing order, meaning $h$ is left polarized.

\begin{defn}
A hit $h$ is \textbf{backward} in a permutation $w$ if $h$ either
\begin{itemize}
\item is both right polarized and left leaning,
\item or is both left polarized and right leaning,
\item or is omni leaning.
\end{itemize}
Otherwise, $h$ is \textbf{forward}.
\end{defn}

\begin{defn}
Let $P$ be a partition of a subset $S$ of $S_c$. Let $C \subseteq
S$. Then $C$ is a \textbf{straightening set} of $P$ if
\begin{itemize}
\item for each part of $P$ containing at least one left leaning
  permutation, $C$ contains exactly one left leaning permutation from
  that part;
\item for each part $C$ of $P$ containing at least one right leaning
  permutation, $C$ contains exactly one right leaning permutation from
  that part;
\item and for each part of $P$ containing only omni leaning
  permutation, $C$ contains exactly one permutation from that part.
\end{itemize}
\end{defn}

\textbf{Example:} $\{12345, 21435, 54231\}$ is a straightening set of
$\{12345, 12354\}\{21435, 54231, 43251\}$.

\begin{defn}
Let $C$ be a straightening set of a partition $P$. A $P$-hit $h$ in $w
\in S_n$ is \textbf{$(P,C)$-straightened} if either
\begin{itemize}
\item $h$ is forward and is in $C$,
\item or $h$ is backward and is in $C$, and there is no
  $P$-rearrangement that when applied to $h$ results in $h$ being
  forward.
\end{itemize}
In turn, \textbf{$(P,C)$-straightening} a $P$-hit $h$ is defined as
rearranging a $P$-hit to be $(P,C)$-straightened by means of a
$P$-rearrangement.
\end{defn}

\textbf{Example:} Let $P=\{123, 213, 321\}$ and $C=\{123,
321\}$. Consider the permutation $85671234$. The hit $567$ is right
leaning but left polarized and is thus backwards. Consequently,
$(P,C)$-straightening it rearranges it to be the forward hit $765$,
bringing us to the permutation $87651234$. On the other hand, $(\{123,
213, 321\}, \{123, 321\})$-straightening the hit $765$ in this new
permutation does nothing at all.


\begin{defn} 
Let $P$ be a partition of a subset of $S_c$ and $C$ be a straightening
set of $P$. Let the binary relation $\rightarrow$ on $S_n$ be such
that for $w,w'\in S_n$, $w \rightarrow w'$ exactly when $w'$ can be
reached from $w$ by $(P,C)$-straightening a hit in $w$ that was not
already $(P,C)$-straightened.  We refer to $\rightarrow$ as the
\textbf{$(P,C)$-straightening operator}.
\end{defn}

\begin{defn}
Let $P$ be a partition of a subset of $S_c$ and $C$ be a straightening
set of $P$. The partition $P$ is said to be \textbf{$C$-confluent} if
the $(P,C)$-straightening operator is confluent.
\end{defn}

\begin{defn}
Let $P$ be $C$-confluent. A permutation $w$ is said to be a
\textbf{$(P,C)$-root permutation} if it is the unique permutation in
its equivalence class under the $P$-equivalence such that it can be
reached from any other equivalent permutation by means of repeated
$(P,C)$-straightenings.
\end{defn}

\textbf{Example:} It turns out that $\{123, 321\}$ is $\{321,
123\}$-confluent. Figure \ref{fig2} demonstrates this for a particular
equivalence class in $S_4$. An arrow between two permutations means
one can be reached from the other under the $\{123, 321\}$-equivalence
by a $(\{123, 321\}, \{321, 123\})$-straightening. Observe how all
paths lead to the permutation $123456$ which is consequently the
$(\{123, 321\}, \{321, 123\})$-root permutation in the class.

\begin{figure}\label{fig2}
\begin{center}
\includegraphics[scale=1.3]{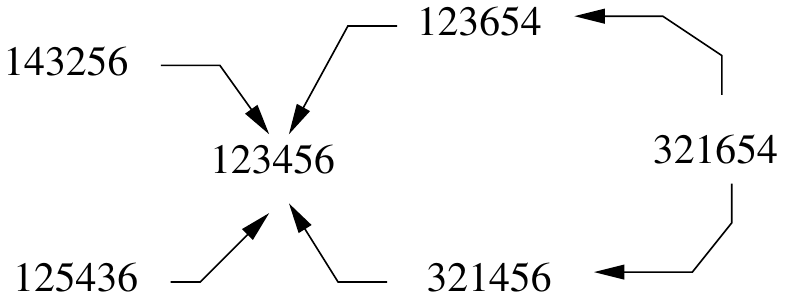}
\end{center}
\caption{Demonstration that $\{123, 321\}$ is $\{321, 123\}$-equivalent for equivalence class containing $123456$.}
\end{figure}

\begin{prop}\label{propsimpleconf}
Let $P$ be a partition of $R_c$ with parts $P_1, P_2, \ldots$. Let $C$
be the straightening set of $P$ containing the lexicographically
smallest permutation from each part of $P$. Suppose that for any two
hits $h$ and $h'$ in any permutation $w \in S_n$ such that $h'$ forms
a permutation in $P_r$, $(P,C)$-straightening $h$ leaves the letters
in $h'$ still in a contiguous subword forming a permutation in
$P_r$. Then $P$ is $C$-confluent.
\end{prop}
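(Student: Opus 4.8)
The plan is to invoke the Diamond Lemma applied to the $(P,C)$-straightening operator $\rightarrow$. Since $\rightarrow$ only acts by straightening a hit that is \emph{not} already straightened, and straightening changes the arrangement of that hit, we have $w \not\rightarrow w$ for all $w$, so it suffices to verify that $\rightarrow$ terminates and is locally confluent. Before doing so I would record a simplification of what ``$(P,C)$-straightened'' means here. The polarization of a hit $h$ in $w$ depends only on the value-set of $h$, through its average $a$, and on the letters adjacent to the window of $h$, through the neighbor $b$; a $P$-rearrangement alters neither, so the forward/backward status of $h$ is invariant under $P$-rearrangement. Because $P$ partitions $R_c$, every hit forms a right-leaning pattern, so $h$ is backward exactly when it is left polarized, and the clause ``no $P$-rearrangement makes $h$ forward'' holds automatically whenever $h$ is backward. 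Hence a hit is $(P,C)$-straightened if and only if it lies in $C$, i.e.\ forms the lexicographically smallest permutation of its part, and $(P,C)$-straightening simply rearranges the window to that lexicographically smallest pattern.

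Termination then follows from a lexicographic monovariant. Straightening a hit replaces the pattern it forms by the strictly lex-smaller pattern of the same part, on the same set of values and the same positions, while leaving the prefix before the window untouched; since a monotone relabeling preserves lexicographic order, each step strictly decreases $w$ lexicographically. As $S_n$ is finite, no infinite $\rightarrow$-sequence can exist.

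For local confluence, consider a peak $w_1 \leftarrow w \rightarrow w_2$ obtained by straightening hits $h_1$ and $h_2$. When the windows of $h_1$ and $h_2$ are disjoint, the two rearrangements act on disjoint positions and commute: straightening $h_2$ in $w_1$ and $h_1$ in $w_2$ yields a common descendant. When the windows overlap I would use the standing hypothesis directly: straightening $h_1$ leaves the letters of $h_2$ contiguous and forming a permutation of the same part $P_{r_2}$, so $h_2$ survives as a (possibly unstraightened) hit in $w_1$, and symmetrically $h_1$ survives in $w_2$. I would then straighten the surviving hit in each branch, using the hypothesis repeatedly to guarantee that no hit inside the combined block $h_1 \cup h_2$ is ever destroyed, and argue that both branches funnel to the same fully straightened arrangement on that block, with the lexicographic monovariant ensuring the process terminates.

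The main obstacle is precisely this overlapping case: I must upgrade ``the other hit survives'' to ``the two branches meet at a common descendant.'' The hypothesis is tailored to keep every hit alive across a straightening, but converting that into confluence requires a careful analysis of how the shared letters in $h_1 \cap h_2$, and the remaining letters of $h_1 \cup h_2$, are permuted by the two straightening orders, together with an argument that reducing to the lex-minimal pattern on overlapping windows is itself confluent on the union. This is where the bulk of the work lies; by contrast, termination and the reduction of ``straightened'' to ``lexicographically minimal in its part'' are comparatively routine.
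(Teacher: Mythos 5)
Your setup is sound and matches the paper's strategy: the Diamond Lemma, the lexicographic monovariant for termination (correct, and the same argument as the paper's), and the observation that because $P$ partitions $R_c$ the forward/backward status of a hit is invariant under $P$-rearrangement, so that a hit is $(P,C)$-straightened exactly when it forms the lex-least pattern of its part. But the proof is incomplete precisely where you say ``the bulk of the work lies'': local confluence for overlapping windows is never established, only described as an obstacle. Leaving that case open is a genuine gap, since for overlapping hits the two straightenings do \emph{not} commute, and ``$h_2$ survives in $w_1$'' by itself does not produce a common descendant.

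The missing idea is that straightening is a \emph{canonicalization} whose output depends only on data that the hypothesis preserves, so no analysis of how the shared letters move is ever needed. Write $D_i(w)$ for the result of straightening the $P$-hit beginning at position $i$ of $w$ (and $D_i(w)=w$ if there is none). Two observations close the gap. First, the hypothesis gives more than survival: the letters of $h'$ lying outside the straightened window do not move, and contiguity of the surviving copy then pins it to exactly its old positions; hence straightening any hit preserves, for every other hit, its window, its letter set, and its part. Second, $D_i$ rearranges window $i$ into the unique arrangement of its letter set forming the lex-least pattern of its part --- a function of (letter set, part) only, and both are invariant under any other $D_j$ by the first observation. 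Together these yield the identity $D_j(D_i(w))=D_j(D_i(D_j(w)))$ for all $i,j$: on either side, positions outside window $j$ carry the letters placed there by $D_i$ (the same letters in the same canonical arrangement, since $D_i$ acts on the same letter set whether or not $D_j$ was applied first), and window $j$ ends in the canonical arrangement of an unchanged letter set. This identity resolves your peak $D_i(w) \leftarrow w \rightarrow D_j(w)$: apply $D_j$ to the left branch, and $D_i$ followed by $D_j$ to the right branch; both land on $D_j(D_i(w))$. This is exactly the paper's one-line argument (``$D_j(D_i(w))=D_j(D_i(D_j(w)))$ due to the restrictions we have imposed''), and it handles overlapping and disjoint hits uniformly, with no case analysis on the shared letters.
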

\begin{proof}
Let $\rightarrow$ be the $(P,C)$-straightening operator. Since $a
\rightarrow b$ implies that $b$ is lexicographically smaller than $a$,
$\rightarrow$ terminates. Given $w \in S_n$, let $D_i(w)$ be the
permutation $w'$ reached by $(P,C)$-straightening the $P$-hit
beginning with $w_i$ in $w$ if such a $P$-hit exists and $w$
otherwise. Note that $D_j(D_i(w))=D_j(D_i(D_j(w)))$ due to the
restrictions we have imposed. Thus by the Diamond lemma, $P$ is
$C$-confluent.
\end{proof}

We state the following lemmas without proof for the sake of brevity.

\begin{lem} \label{lemoverlappinghit}
Let $w \in S_n$ and $h$ and $h'$ be overlapping hits of size $c$ in
$w$. Then, $h$ and $h'$ are both forward and either are both left
leaning or are both right leaning.
\end{lem}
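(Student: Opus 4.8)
The plan is to reduce to a single canonical configuration and then read off both conclusions from the shared portion of the two hits. Without loss of generality, order the hits so that $h$ occupies positions $[i,i+c-1]$ and $h'$ occupies $[j,j+c-1]$ with $i<j\le i+c-1$. Write $A$ for the letters of $h$ lying strictly to the left of the overlap, $B$ for the overlapped block on positions $[j,i+c-1]$, and $D$ for the letters of $h'$ lying strictly to the right of the overlap, so that $h=AB$ and $h'=BD$ as subwords and $|A|=|D|=j-i$. The entire argument is organized around the observation that $h$ and $h'$ literally share the block $B$: every order relation among the letters of $B$ forced by $h$ is also present in $h'$, and conversely. I would further normalize by recording, for each hit, where its smallest and largest values sit relative to $B$, since both the leaning and the polarization of a hit are governed by the positions of its extreme values.

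I would settle the leaning claim first. The plan is to use the description of leaning in terms of a global value-split: a hit is right leaning precisely when some proper prefix of it consists of its smallest values, left leaning precisely when some proper prefix consists of its largest values, and omni leaning otherwise. I would then show that such a split of $h$ forces a compatible split of $h'$ through the common block $B$. Concretely, a split of $h$ whose cut falls inside $B$ is simultaneously a split of $h'$, whereas a split whose cut falls inside $A$ forces all of $B\cup D$ onto a single side in value and hence induces a split of $h'$ of the same type; the mirror statement handles cuts inside $D$. Running this in both directions shows that $h$ and $h'$ admit low (respectively high) prefixes together, so they lean the same way, and that an overlapped window always inherits such a cut, so neither hit is omni.

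For forwardness I would work with polarization directly. For $h$, the contracted word $w'$ replaces $h$ by a single letter equal to the average of $h$, and the letter $b$ determining the polarization of $h$ is the neighbor of this average closest to it in value. The key point is that, because the hits overlap, the relevant neighbor of $h$ lies inside the territory of $h'$ (and vice versa), so the position of $b$ relative to the average of $h$ is controlled by the same split of $h'$ identified above. Matching this against the leaning of $h$, I would check that polarization always agrees with leaning, i.e.\ that right-leaning hits come out right polarized and left-leaning hits left polarized; by the definition of backward, agreement of polarization and leaning is exactly forwardness, and this re-confirms that neither hit is omni.

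The main obstacle, and where I expect the real work, is the interface between the order-theoretic notion of leaning and the metric notion of polarization, together with the most degenerate overlap. When $B$ is a single position, the non-overlapping parts $A$ and $D$ are largest and the induced split of the partner hit is least constrained, so the location of each window's minimum and maximum relative to the boundary must be tracked carefully; and because $b$ is selected by closeness in value rather than by the coarse pattern, one must verify that the value comparison defining polarization lines up with the set comparison defining the split. I would therefore organize the proof as a case analysis on the overlap size and on the positions of the extreme values of each hit relative to $B$, expecting the single-overlap case to be the bottleneck and the remaining cases to follow the same template with more room to spare.
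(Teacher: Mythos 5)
The paper itself states this lemma without proof (``We state the following lemmas without proof for the sake of brevity''), so your proposal must stand on its own; unfortunately it has a genuine gap. The engine that makes the lemma true is never invoked: because $h$ and $h'$ are hits, each one's letters occupy a set of $c$ \emph{consecutive} values, and because they share the $c-d$ letters of $B$ (where $d=j-i$), these two integer intervals intersect and their union has exactly $c+d$ elements; hence the two intervals are offset by exactly $d$, which forces the values of $A$ and of $D$ to be the $d$ smallest and the $d$ largest values of $A\cup B\cup D$, one block at each end. This single observation does all the work: it hands each hit a canonical cut at the $A$--$B$ (resp.\ $B$--$D$) boundary, proving at once that both hits lean the same way \emph{and} that neither is omni leaning, and it pins down the values of the hits' neighbors for the polarization check. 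Your split-transfer mechanism cannot substitute for it: transferring cuts can only propagate a cut that already exists, so it can never rule out the possibility that both hits are omni leaning (you assert that ``an overlapped window always inherits such a cut'' with no supporting argument); moreover, the transfer claims themselves---e.g., that a cut inside $A$ ``forces all of $B\cup D$ onto a single side in value''---are precisely the statements that require the consecutive-values property and are false for general words, while your stated justification (that $h$ and $h'$ agree on order relations inside $B$) says nothing about how $D$ compares to $A$ or $B$.

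The polarization half also contains a concrete false step: it is not true that ``the relevant neighbor of $h$ lies inside the territory of $h'$.'' Take $w=123456$, $c=3$, $h=234$, $h'=345$. The average of $h$ is $3$, its neighbors in the contracted word are $1$ and $5$, these are equidistant from $3$, and the tie-break selects $b=1$, which lies outside $h'$. The conclusion (right polarized) still holds, but for a reason your outline omits: the outside neighbor's value lies outside the interval of values of $h\cup h'$, so either it is below everything (and then the order with the average is increasing regardless) or it is above everything (and then the neighbor on the $D$ side is strictly closer in value, so $b$ is that neighbor and again the order is increasing). A related slip: verifying that polarization agrees with leaning does not ``re-confirm that neither hit is omni,'' since omni leaning is an intrinsic property of the hit and makes it backward irrespective of polarization. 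Finally, once the interval observation is in place the argument is uniform in the overlap size, so the case analysis you anticipate, with the single-letter overlap as a bottleneck, is unnecessary. As written, the proposal is missing the one idea the lemma turns on, and several of its intermediate claims fail without it.
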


\begin{lem}\label{lemletterorder}
Let $w$ and $w'$ be equivalent under the $P$-equivalence where $P$
partitions a subset of $S_c$. Suppose $w_i < w_j$ for $|i-j|\ge
c$. Then, $w'_i < w'_j$.
\end{lem}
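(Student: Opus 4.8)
The plan is to prove Lemma \ref{lemletterorder}, which asserts that the relative order of two letters whose positions differ by at least $c$ is preserved under any $P$-rearrangement (and hence under the entire $P$-equivalence). Since the $P$-equivalence is the reflexive-transitive closure of single $P$-rearrangements, it suffices by induction to prove the statement for a single rearrangement step: if $w \rightarrow w'$ by rearranging one $P$-hit $h$, and $w_i < w_j$ with $|i-j| \ge c$, then $w'_i < w'_j$. (The full statement then follows by chaining this across a sequence of rearrangements witnessing $w \equiv w'$, using transitivity of the strict order relation on the intermediate words.)

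First I would fix a single $P$-rearrangement carried out on a hit $h$ occupying some block of $c$ consecutive positions, say positions $\{k, k+1, \ldots, k+c-1\}$. The key structural observation is that a $P$-rearrangement only permutes the letters lying within $h$; every letter outside this block of $c$ contiguous positions stays fixed in both position and value. So I would split into cases according to how the two positions $i$ and $j$ relate to the hit block. The essential case is when at least one of $i,j$ lies outside the block: because $|i-j| \ge c$ and the block has width exactly $c$, at most one of $i,j$ can lie inside the block, and in fact whichever lies inside is weakly separated from the outside one so that their relative order cannot be disturbed.

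The heart of the argument is the following: suppose without loss of generality $i < j$, so $w_i < w_j$ means the smaller-positioned letter is also smaller in value. Since $|i-j|\ge c$ and the hit spans only $c$ consecutive positions, the two positions $i$ and $j$ cannot both be interior to the hit. If neither is in the hit, both letters are untouched and the claim is immediate. If exactly one of them, say position $i$, lies in the hit block while $j$ lies strictly after the block, then every letter of the hit sits at a position $\le k+c-1 < j$; a $P$-hit forms a permutation pattern, so all its letter values lie in a contiguous range of integers, and because position $j$ is outside and at least $c$ positions away, $w_j$ is separated from that range in the direction consistent with $w_i < w_j$. The rearrangement permutes values only within the hit, so $w'_i$ still takes some value from the hit's range, which remains below $w_j = w'_j$. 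The symmetric subcase with $j$ in the hit and $i$ before it is handled identically. I would want to verify carefully that a hit's values occupy a genuine integer interval (which follows directly from the definition of ``forms a permutation,'' where the hit equals $(u_1+k')\cdots(u_c+k')$ for a pattern $u \in S_c$), so that any value strictly outside the block of positions either dominates or is dominated by the whole interval, with the sense fixed by the original inequality.

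The main obstacle I anticipate is the bookkeeping in the boundary subcase where one of $i,j$ sits at the very edge of the hit (position $k$ or $k+c-1$) and the other is exactly $c$ positions away, so that only the \emph{index} separation, not extra slack in value, protects the inequality. The subtle point is ensuring that the constraint $|i-j|\ge c$ is tight enough: it forces the outside letter to be positionally non-adjacent to at least one end of the hit in a way that, combined with the interval structure of the hit's values, pins down whether the outside value lies above or below the entire hit. I would make this precise by noting that if $i$ is inside the hit and $j \ge i + c$, then $j$ lies beyond the entire hit (since the hit occupies at most positions $i, \ldots, i+c-1$ relative to $i$, but more carefully the hit is a fixed block and $j$ exceeds its right end), and conversely for $j$ inside; the interval property then forces $w'_i$ and $w'_j$ to retain their order. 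Once these cases are dispatched, reassembling them into the single-step claim is routine, and the induction on the length of the rearrangement sequence closes the argument.
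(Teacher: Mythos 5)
Your proof is correct. Note that the paper itself states this lemma without proof (``We state the following lemmas without proof for the sake of brevity''), so there is no printed argument to compare against; your write-up supplies exactly the intended justification. The two load-bearing facts are the ones you identify: since a hit occupies $c$ consecutive positions, $|i-j|\ge c$ forces at most one of the two positions to lie inside any given hit, and since a hit forming a pattern in $S_c$ occupies a \emph{complete} interval of $c$ consecutive values, every letter outside the hit is either above the whole interval or below it, so its order relative to anything inside the hit is unchanged by permuting the hit; induction along the chain of single $P$-rearrangements (which is legitimate because a $P$-rearrangement is a symmetric relation, both directions staying within the same part of $P$) then finishes the argument. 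Two small tightenings: the phrase ``because position $j$ is outside and at least $c$ positions away, $w_j$ is separated from that range'' misattributes the reason --- the separation comes from the value-completeness of the hit's interval together with distinctness of letters in a permutation, not from any positional distance beyond $j$ being outside the hit --- though your later paragraph states the correct reason; and the ``boundary subcase'' you worry about is vacuous, since once $j$ is outside the hit the interval argument is uniform and needs no extra slack.
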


\begin{lem}\label{lemlocalorderchange}
Let $w,w' \in S_n$ be such that $w'$ can be reached from $w$ by a
$P$-rearrangement (where $P$ partitions a subset of $S_c$). Let $i$
and $j$ be such that $w_i$ is not in $h$. Then, $(w_i < w_j) = (w'_i <
w'_j)$.
\end{lem}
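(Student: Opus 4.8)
The plan is to exploit the purely local nature of a $P$-rearrangement: by definition it permutes only the letters occupying the contiguous positions of the hit $h$, and it leaves every other letter fixed in both position and value. So my first step would be to record that, since $w_i$ is not in $h$, position $i$ lies outside the hit and hence $w'_i = w_i$. If position $j$ also lies outside $h$, then likewise $w'_j = w_j$, and the equality of the two Boolean values $(w_i < w_j)$ and $(w'_i < w'_j)$ is immediate. Thus the only case requiring any argument is the one in which $w_j$ lies in $h$.

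For that remaining case I would invoke the defining property of a hit. Since $h$ forms a permutation in $S_c$, the $c$ letter values occupying $h$ are exactly $k+1, k+2, \ldots, k+c$ for some integer $k$; that is, they form a block of consecutive integers. A $P$-rearrangement only permutes these same $c$ values among the positions of $h$, so the set of values inside $h$ is identical before and after the rearrangement, and in particular both $w_j$ and $w'_j$ lie in $\{k+1, \ldots, k+c\}$. Because $w$ is a permutation and $w_i$ is not one of the letters of $h$, the value $w_i = w'_i$ is either strictly less than $k+1$ or strictly greater than $k+c$. In the first case $w_i$ lies below every value occupying $h$, so $w_i < w_j$ and $w'_i < w'_j$ both hold; in the second case $w_i$ lies above every such value, so both inequalities fail. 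Either way the two truth values agree, which is what the lemma asserts.

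The numerical content here is minimal; the substance of the argument is the structural observation that the letters of a hit occupy a contiguous range of values, so that any letter lying outside the hit compares the same way against all of them. Once this is in hand the conclusion is a one-line case split. The only points I would be careful to state explicitly are that a $P$-rearrangement preserves the \emph{set} of values occupying $h$ (merely relocating them), which is precisely what keeps $w'_j$ inside the same consecutive block as $w_j$, and that the hypothesis is asymmetric in $i$ and $j$ (only $w_i$ is assumed to lie outside $h$), so one cannot invoke symmetry to shorten the case analysis. I do not anticipate a genuine obstacle beyond isolating this contiguity fact.
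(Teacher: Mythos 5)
Your proof is correct. The paper states this lemma without proof (``for the sake of brevity''), so there is no argument to compare against; your reasoning---that a $P$-rearrangement fixes every letter outside the hit, and that the letters of a hit occupy a consecutive block of values $\{k+1,\ldots,k+c\}$, so a letter outside the hit compares the same way against every value in that block before and after the rearrangement---is exactly the intended justification, and your case split (position $j$ outside versus inside $h$) covers everything the statement requires.
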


Now we introduce a slightly less obvious Lemma.

\begin{lem}\label{lemignorebackwardhits}
Let $U$ be a $D$-confluent partition. Then
\begin{enumerate}
\item and $(U,D)$-straightening any forward hit in a permutation
  results in a permutation with the same number of $U$-hits that are
  both backward and non-$(U,D)$-straightened.
\item $(U,D)$-straightening a non-$(U,D)$-straightened backward hit in
  a permutation results in a permutation with one fewer $U$-hits that
  are both backward and non-$(U,D)$-straightened;
\end{enumerate}
\end{lem}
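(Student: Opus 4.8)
The plan is to reduce both parts of the lemma to a single local claim and then verify that claim by a short case analysis, isolating the polarization of adjacent hits as the one genuinely delicate point. Before anything else I would record two structural facts. A $P$-rearrangement of a hit only permutes letters inside a fixed block of positions and preserves their value set, which is a block of consecutive integers; and the polarization of a hit depends only on its value set and its (one or two) neighbors, never on the internal arrangement of the hit, because the average $a$ in the definition of polarization is arrangement-independent. Hence the forward/backward type of a hit is a function of its leaning, an internal property, together with the data of its neighbors alone.

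Now index the $U$-hits of a permutation by their starting positions, and for a permutation $v$ let $N(v)$ be the number of positions $i$ at which the length-$c$ subword beginning at position $i$ is a $U$-hit that is both backward and non-$(U,D)$-straightened. Let $w \to w''$ be the $(U,D)$-straightening of a hit $h$ occupying a fixed block $B$; every position outside $B$ keeps its value in $w''$. For the hit $h$ itself the claim is immediate. In Part 1, $h$ is forward in $w$; since its polarization is arrangement-independent and $D$ contains a representative of its leaning, straightening it keeps it forward and places it in $D$, so it is $(U,D)$-straightened in $w''$ and contributes $0$ to $N$ both times. In Part 2, $h$ is backward and non-straightened in $w$ (contributing $1$) and $(U,D)$-straightened in $w''$ (contributing $0$). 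So both parts follow once I show that no starting position other than that of $h$ changes its contribution to $N$.

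I would then split the remaining positions by how they meet $B$. A position whose whole length-$c$ block avoids $B$ keeps all $c$ of its letters, so it is a $U$-hit in $w''$ iff it is one in $w$; by Lemma \ref{lemlocalorderchange} its internal order, hence its leaning, is unchanged. For a position whose block overlaps $B$, I would invoke Lemma \ref{lemoverlappinghit}: overlapping size-$c$ hits are both forward. Applying this in $w$ and again in $w''$, in each of which $h$ is a $U$-hit, every overlapping position that is a hit is forward and so contributes $0$ to both $N(w)$ and $N(w'')$, while non-hits contribute $0$ trivially; thus overlapping positions — including any hits created or destroyed by the rearrangement — never affect the count. This leaves only positions disjoint from but adjacent to $B$, whose leaning is already preserved, so that the sole remaining issue is polarization.

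The crux is therefore that rearranging $h$ cannot change the polarization of a hit $h'$ disjoint from $B$ but sharing a boundary with it. At most one neighbor of $h'$ lies in $B$, since $B$ is a single contiguous block, and the letters of $h$ form a block of consecutive values lying entirely below or entirely above the value range of $h'$. Since polarization is decided by which neighbor of $h'$ is closest to its midpoint $a'$ and on which side of $h'$'s range it sits, I would check by the short case analysis on these configurations that the nearest-neighbor choice and its side are the same for every value the rearrangement could place at the shared boundary: if the opposite neighbor lies strictly between the two blocks it is closer than every letter of $h$, if it lies beyond $h$'s block it is farther than every letter of $h$, and in the remaining configurations both candidate neighbors yield the same polarization. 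Hence $h'$'s polarization, and with its preserved leaning its forward/backward type, is unchanged; its $(U,D)$-straightenedness is unchanged for the same reason, since $h'$'s own letters are untouched and the type of each rearrangement of $h'$ depends only on its value set and the preserved side-structure of its neighbors. Assembling the three cases, every position other than that of $h$ contributes equally to $N(w)$ and $N(w'')$, giving the stated counts. I expect this nearest-neighbor invariance for adjacent hits to be the only real obstacle, with the consecutive-integer block structure of hit values being exactly what makes it go through; the non-overlapping and overlapping cases I would dispatch quickly.
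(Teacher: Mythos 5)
Your proof is correct and follows essentially the same route as the paper's: both rest on the two facts that a backward hit cannot overlap any other hit (Lemma \ref{lemoverlappinghit}) and that straightening one hit cannot change another hit's forward/backward status or straightenedness, from which both parts follow by tracking the count of backward non-$(U,D)$-straightened hits. The only difference is one of rigor at the point you correctly identify as the crux: where the paper attributes polarization-invariance of neighboring hits directly to Lemma \ref{lemlocalorderchange}, you prove it by an explicit case analysis on the consecutive-value blocks --- a worthwhile addition, since order-preservation alone does not immediately yield it (rearranging $h$ can change \emph{which} neighbor of $h'$ is closest to its average, even though, as your three configurations show, the resulting polarization is always the same).
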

\begin{proof}
Observe that (a) a backward $U$-hit cannot overlap any other $U$-hit
(by Lemma \ref{lemoverlappinghit}), and (b) $(U,D)$-straightening one
$U$-hit cannot change whether another $U$-hit is right polarized or
left polarized (by Lemma \ref{lemlocalorderchange}).

By (a) and (b), $(U,D)$-straightening a forward $U$-hit in a
permutation does not change the set of backward $U$-hits in the
permutation. Also by (a) and (b), $(U,D)$-straightening a backwards
non-$(U,D)$-straightened hit cannot rearrange another
$(U,D)$-straightened backward hit to become non-$(U,D)$-straightened.
\end{proof}

\begin{defn} 
Let $J,K,P$ be partitions of $A,B,C \subseteq S_c$ respectively such
that $A \cap B=\{\}$ and $A \cup B=C$. We say that $P$ is a
\emph{disjoint union} of $J$ and $K$ if each part of $P$ is a disjoint
union of some (possibly empty) part from $J$ and some (possibly empty)
part from $K$.
\end{defn}

\textbf{Example:} $\{1234, 2134, 3214\} \{ 4123\} \{2134, 2314\}$ is a
disjoint union of $\{1234\}\{4123\}\{2134\}$ and $\{2134,
3214\}\{2314\}$.

\begin{thm}\label{thmcombineconf}
Let $P$ be a $C$-confluent partition of $R_c$ and $Q$ be a
$C'$-confluent partition of $L_c$. Let $D=C \cup C'$ and $U$ be a
disjoint union of $P$ and $Q$. Then $U$ is $D$-confluent.
\end{thm}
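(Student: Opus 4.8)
The plan is to verify that $D=C\cup C'$ is a straightening set of $U$ and then apply the Diamond Lemma to the $(U,D)$-straightening operator, which I denote $\rightarrow$. That $D$ is a straightening set is routine: each part of $U$ is a disjoint union of a (right leaning) part of $P$ and a (left leaning) part of $Q$, the domain $R_c\cup L_c$ of $U$ has no omni leaning members, and $D$ contains exactly one right leaning representative (from $C$) and one left leaning representative (from $C'$) of each part. First I record how a single hit straightens. Since leaning and polarization are shift-invariant and, by Lemma \ref{lemlocalorderchange}, unchanged by straightening any other hit (and unchanged by straightening the hit itself, which preserves its average value and its neighbors), the straightened form of a hit is governed by its polarization: a right polarized hit straightens to the right leaning $C$-representative of its $U$-part when the $P$-part is nonempty (a forward hit) and otherwise to the left leaning $C'$-representative (a backward hit), with the left polarized case symmetric. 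The crucial structural input is a \emph{non-interference} statement: straightening a left leaning hit neither creates nor destroys any right leaning hit, and symmetrically. Indeed, by Lemma \ref{lemoverlappinghit} a right leaning hit can never overlap a left leaning hit, so any right leaning hit is disjoint from the window being rearranged and is untouched, and conversely any right leaning hit present afterward was already present.

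For termination I would combine two facts. The argument behind Lemma \ref{lemignorebackwardhits} uses only Lemmas \ref{lemoverlappinghit} and \ref{lemlocalorderchange}, not confluence of $U$, so it already shows that no straightening increases the number of non-$(U,D)$-straightened backward $U$-hits and that straightening a backward hit strictly decreases it; hence only finitely many backward straightenings occur along any chain. Between consecutive backward straightenings every step is a forward straightening, which on a right leaning hit is exactly a $(P,C)$-straightening and on a left leaning hit exactly a $(Q,C')$-straightening. By the non-interference statement these two families act without disturbing one another, so an infinite run of forward straightenings would contain an infinite $(P,C)$- or $(Q,C')$-straightening chain, contradicting the termination guaranteed by $C$-confluence of $P$ and $C'$-confluence of $Q$. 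Thus $\rightarrow$ terminates.

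For local confluence, suppose $w\rightarrow w_1$ and $w\rightarrow w_2$ by straightening distinct hits $h_1$ and $h_2$. If $h_1$ and $h_2$ do not overlap — which by Lemma \ref{lemoverlappinghit} includes every case in which either is backward — then their windows are disjoint, each remains a non-straightened hit straightening the same way after the other is straightened (polarizations being invariant), and the two straightenings commute, joining at the permutation obtained by straightening both. If $h_1$ and $h_2$ overlap, then by Lemma \ref{lemoverlappinghit} they are both forward and of the same leaning, say right leaning; both steps are then $(P,C)$-straightenings. I would argue that, on the block of positions spanned by the overlapping cluster, $\rightarrow$ coincides with the $(P,C)$-straightening operator, so that $C$-confluence of $P$ joins $w_1$ and $w_2$ by steps that are genuine $\rightarrow$-steps, with the left leaning case identical via $C'$-confluence of $Q$. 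Having verified termination and local confluence, the Diamond Lemma yields that $\rightarrow$ is confluent, i.e.\ $U$ is $D$-confluent.

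The main obstacle is precisely this last coincidence in the overlapping case: that while joining $w_1$ and $w_2$ one never needs to straighten a left polarized right leaning hit, on which $\rightarrow$ and the $(P,C)$-straightening operator genuinely disagree (one keeps the hit right leaning, the other sends it across to a left leaning representative). The resolution I expect to carry out is that every hit overlapping the spanned block is, by Lemma \ref{lemoverlappinghit}, right leaning and forward, hence right polarized; restricting the join to straightenings of hits overlapping the (right polarized) active region therefore never leaves the right leaning world, and $P$'s confluence can be imported verbatim. Making this localization rigorous — controlling the hits created as the active region evolves and confirming they stay right polarized — is the delicate point, and it is exactly where Lemma \ref{lemoverlappinghit} does the real work of keeping the two leaning worlds separate.
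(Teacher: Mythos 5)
Your proposal follows the paper's own proof almost step for step: both arguments verify the two hypotheses of the Diamond Lemma, both prove termination by splitting off the finitely many backward straightenings (via Lemma \ref{lemignorebackwardhits}) and then extracting an infinite chain of forward right leaning (or left leaning) straightenings to contradict the confluence of $P$ (or $Q$), and both prove local confluence by the same dichotomy: non-overlapping hits commute, while overlapping hits are forward and of a single leaning by Lemma \ref{lemoverlappinghit}, so the confluence of $P$ or $Q$ supplies the join.

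Where you go beyond the paper, you are catching real slack in its write-up. First, the paper invokes Lemma \ref{lemignorebackwardhits} for $U$ even though that lemma is stated under the hypothesis that $U$ is $D$-confluent, which is exactly what is being proved; your observation that its proof uses only Lemmas \ref{lemoverlappinghit} and \ref{lemlocalorderchange}, and not confluence, is the correct repair. Second, the ``delicate point'' you flag in the overlapping case is genuine: the $(U,D)$-straightening operator and the $(P,C)$-straightening operator disagree on backward (left polarized, right leaning) hits --- the former rearranges such a hit across to a left leaning representative, the latter keeps it right leaning --- so the join promised by $C$-confluence of $P$ is not automatically a join by $(U,D)$-steps; the paper asserts that it is without comment. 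You can close this more cleanly than by localizing to the spanned block. Backward hits never overlap anything (Lemma \ref{lemoverlappinghit}) and are untouched, window and contents alike, by forward straightenings (this is the content of the proof of Lemma \ref{lemignorebackwardhits}). So from $b$ and from $c$ exhaust forward $(P,C)$-straightenings, reaching $b^*$ and $c^*$ with no non-straightened forward hits; straightening the remaining backward hits --- the same isolated windows with the same contents in both, each straightening deterministic, creating and destroying no other hits --- sends $b^*$ and $c^*$ to nodes with no outgoing edges, hence to the unique $P$-minimal element $m$ of the component. Since those last steps alter only the backward windows, and $b^*$, $c^*$ carry identical contents there, $b^*=c^*$. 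Thus $b$ and $c$ are joined entirely by forward straightenings, which are legitimate $(U,D)$-steps. With that patch your argument (and, implicitly, the paper's) is complete.
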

\begin{proof}
Let $\rightarrow$ acting on $S_n$ be the $(U,D)$-straightening
operator and $\rightarrow'$ be the reflexive-transitive closure of
$\rightarrow$. Since $D$ is clearly a straightening set of $U$, it is
sufficient to show that $\rightarrow$ is confluent. To do this, we
will use the Diamond Lemma.

We begin by proving that $\rightarrow$ terminates. Suppose there
exists an infinite chain $a_1 \rightarrow a_2 \rightarrow \ldots$. For
each $i$, let $h_i$ be a $U$-hit $h$, the $(U,D)$-straightening of
which brings one from $a_i$ to $a_{i+1}$. Without loss of generality,
an infinite number of $i$ satisfy that $h_i$ is right leaning in $a_i$
(since otherwise, an infinite number of $i$ would satisfy that $h_i$
is left leaning).

Let $k$ be the number of backward $U$-hits in $a_1$. By Lemma
\ref{lemignorebackwardhits} there are at most $k$ values of $i$ such
that $h_i$ is backward in $a_i$. Let $j$ be the greatest such $i$.

Observe that for $i > j$, $h_i$ is forward in $a_i$. By Lemma
\ref{lemoverlappinghit}, the action of $(U,D)$-straightening a right
leaning $U$-hit commutes with the action of $(U,D)$-straightening a
left leaning $U$-hit. Recall that there exists an infinite number of
$i$ such that $h_i$ is right leaning in $a_i$, and thus also an
infinite number of such $i$ where $i>j$. It follows that for a given
value of $l$, there exists an infinite chain $b_1 \rightarrow b_2
\rightarrow \ldots$ such that $b_{i+1}$ is reached from $b_i$ by means
of the $(U,D)$-straightening of a forward right leaning
$U$-hit. However, since such hits are also $P$-hits and $P$ is
$C$-confluent, the pigeonhole principle implies this cannot be true
for $l>n!$, a contradiction.

We will now consider condition (2) of the Diamond Lemma. Suppose
$a,b,c \in S_n$ are such that $a \rightarrow b$, $a \rightarrow c$,
$a$ reaches $b$ by means of the $(U,D)$-straightening of the $U$-hit
$h$, and $a$ reaches $c$ by means of the $(U,D)$-straightening of the
$U$-hit $h'$. If $h$ and $h'$ do not overlap in $a$, then
$(U,D)$-straightening $h$ and then $(U,D)$-straightening $h'$ in $a$
yields the same permutation as does $(U,D)$-straightening $h'$ and
then $(U,D)$-straightening $h$. Suppose instead that $h$ and $h'$
overlap. By Lemma \ref{lemoverlappinghit}, we can assume without loss
of generality that both are forward and right leaning. By the
assumption that $P$ is $C$-confluent, it follows that there exists $d$
such that $b \rightarrow 'd$ and $c \rightarrow ' d$.

The result follows from the Diamond Lemma.
\end{proof}

We now show that omni leaning permutations can be nicely inserted into
arbitrary parts of partitions without screwing up the confluence
properties of the partition.

\begin{thm}\label{thmomni}
Let $P$ be a $C$-confluent partition of a subset $S$ of $S_c$. Let $o
\in O_c$ such that $o$ is not in $S$.
\begin{enumerate}
\item Let $P'$ be $P$ except with $o$ added to one of the parts of
  $P$. Then $P'$ is $C$-confluent.
\item Let $P'$ be $P$ except with $o$ added as a part of size
  $1$. Then $P'$ is $C \cup \{o\}$-confluent.
\end{enumerate}
\end{thm}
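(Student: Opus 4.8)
The plan is to prove both parts by showing directly that the relevant $(P',\cdot)$-straightening operator is confluent, via the Diamond Lemma. Throughout, the governing structural fact is that an omni leaning hit is backward by definition, so by Lemma \ref{lemoverlappinghit} a hit forming $o$ cannot overlap any other hit; the letters of an $o$-hit therefore occupy an isolated block. I would first record that the proposed straightening sets are legitimate: adjoining the omni permutation $o$ to a part (part (1)) changes neither which parts contain left or right leaning permutations nor the number of representatives $C$ must supply, so $C$ remains a straightening set of $P'$; and in part (2) the new singleton part $\{o\}$ contains only the omni permutation $o$, so $C \cup \{o\}$ is a straightening set of $P'$.

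Part (2) is the easier case and I would dispatch it first. Since $\{o\}$ has size $1$, the only $P'$-rearrangement of a hit forming $o$ leaves it forming $o$; as $o$ is omni and hence always backward, such a hit can never be made forward, and it already forms a permutation of $C\cup\{o\}$. Thus every $o$-hit is already $(P',C\cup\{o\})$-straightened, so it is never the hit rearranged in a straightening move. For a hit forming a permutation of the old set $S$, the available $P'$-rearrangements, the straightened condition, and the target in $C$ are all identical to those under $P$, since the relevant part is untouched. Hence the $(P',C\cup\{o\})$-straightening operator coincides, as a relation on $S_n$, with the $(P,C)$-straightening operator, which is confluent by hypothesis; therefore $P'$ is $C\cup\{o\}$-confluent.

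For part (1) I would again compare the $(P',C)$-straightening operator $\rightarrow$ with the $(P,C)$-straightening operator, writing $P_i$ for the part to which $o$ is adjoined. Because $o$ is backward it can never witness that a hit can be made forward, and because $o\notin C$ it is never a straightening target; consequently the $(P',C)$-straightened status of any hit forming a permutation of $S$ is exactly its $(P,C)$-straightened status, so these moves are precisely the moves of the $(P,C)$-operator. The genuinely new moves straighten a hit forming $o$ into a hit forming a permutation of $C\cap P_i$. Using the isolation of $o$-hits (Lemma \ref{lemoverlappinghit}), I would show that no straightening move ever creates an $o$-hit: a putative new $o$-hit is either disjoint from the rearranged block, hence unchanged and not new, or overlaps it, contradicting that the rearranged block is now a hit and that $o$-hits overlap no hit. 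Hence the number of $o$-hits is invariant under the old moves and strictly decreases under each new move; since the $(P,C)$-operator terminates, only finitely many old moves occur between consecutive new moves, and so $\rightarrow$ terminates.

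The main work, and the step I expect to be the real obstacle, is establishing condition (2) of the Diamond Lemma for $\rightarrow$. Given $a\rightarrow b$ and $a\rightarrow c$ by straightening hits $h$ and $h'$, the case where $h,h'$ overlap forces, by Lemma \ref{lemoverlappinghit}, both to be forward hits forming permutations of $S$ (so neither is an $o$-hit), and is then joinable by the $C$-confluence of $P$, exactly as in the overlapping case of the proof of Theorem \ref{thmcombineconf}. The delicate case is when $h$ and $h'$ are disjoint but adjacent, since straightening one alters the boundary letter neighbouring the other and could a priori change its polarization, and hence its straightened target. Here I would invoke Lemma \ref{lemlocalorderchange}: straightening $h$ preserves every order relation involving a letter outside $h$, which pins down the boundary letter of $h'$ relative to the values of $h'$ and thereby shows the straightened status and target of $h'$ are unaffected, this being the principle underlying part (b) of the proof of Lemma \ref{lemignorebackwardhits}. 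With the polarizations of non-rearranged hits thus preserved, disjoint straightenings commute and furnish the common descendant $d$, and the Diamond Lemma yields that $\rightarrow$ is confluent, i.e. that $P'$ is $C$-confluent. Verifying this commutation of adjacent straightenings carefully, ruling out an apparent polarization flip at a shared boundary, is where I expect to spend the most care.
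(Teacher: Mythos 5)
Your proposal is correct and follows essentially the same route as the paper: both reduce the theorem to the Diamond Lemma using the two key facts that a hit forming $o$ is necessarily backward and, by Lemma \ref{lemoverlappinghit}, cannot overlap any other hit, so that termination follows from the finiteness of $o$-hit straightenings together with the termination of the $(P,C)$-operator, and the local-confluence condition reduces to the $C$-confluence of $P$. Your write-up is somewhat more explicit than the paper's (direct counting of $o$-hits instead of citing Lemma \ref{lemignorebackwardhits}, an explicit treatment of part (2), and of the disjoint-adjacent commutation via Lemma \ref{lemlocalorderchange}), but these are refinements of the same argument rather than a different one.
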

\begin{proof}
We only bother to prove (1), as (2) is straightforward. Suppose $P'$
is $P$ except with $o$ added to one of the parts of $P$. Let
$\rightarrow$ acting on $S_n$ be the $(P',C)$-straightening
operator. We will show that $\rightarrow$ satisfies the conditions of
the Diamond Lemma.

Any $P'$-hit forming $o$ is backwards. Thus, by Lemma
\ref{lemignorebackwardhits}, if there exists an infinite chain $a_1
\rightarrow a_2 \rightarrow \cdots$, then there is some $k$ such that
$a_k \rightarrow a_{k+1} \rightarrow \cdots$ and no $P'$-hits forming
$o$ are used to go from $a_i$ to $a_{i+1}$ in the chain for $i \ge
k$. But since $P$ is $C$-confluent, no such chain can exist. Thus
$\rightarrow$ terminates.

Observe that a $P'$-hit forming $o$ cannot overlap any other $P'$-hit
(Lemma \ref{lemoverlappinghit}). Since $P$ is $C$-confluent, it
follows that condition (2) of the Diamond Lemma is satisfied by
$\rightarrow$. Thus $\rightarrow$ is confluent.
\end{proof}

\begin{thm}\label{thmconfcount}
Let $P$ be a $C$-confluent partition of $S_c$. Suppose that for
$P$-hits $h$ and $h'$ in $w \in S_n$ to overlap implies that at least
one of $h$ and $h'$ is $(P,U)$-straightened. Pick $n \in \mathbb{N}$
and let $k=n!-|P|$ where $|P|$ is the number of parts in $P$. Then the
number of equivalence classes in $S_n$ under the $P$-equivalence,
which we will denote as $f(n)$, is
$$\sum\limits_{j \ge 0} \frac{(-1)^j(n-cj+j)!^2 k^j}{j!(n-cj)!}.$$
\end{thm}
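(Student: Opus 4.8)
The plan is to count equivalence classes by counting $(P,C)$-root permutations, since confluence guarantees exactly one root per class. First I would observe that a permutation $w \in S_n$ is a $(P,C)$-root precisely when no $(P,C)$-straightening can be applied to it, i.e.\ every $P$-hit in $w$ is already $(P,C)$-straightened. So the goal reduces to counting permutations all of whose $P$-hits are $(P,C)$-straightened. I would set up the count by inclusion-exclusion over the \emph{bad} events: for each starting index $i$ (with $1 \le i \le n-c+1$), let $A_i$ be the set of permutations in which the length-$c$ contiguous subword beginning at position $i$ is a $P$-hit that is \emph{not} $(P,C)$-straightened. A permutation is a root exactly when it avoids all the $A_i$, so $f(n) = |S_n| - |\bigcup_i A_i|$, and I would expand this by inclusion-exclusion as $\sum_j (-1)^j \sum_{|I|=j} |\bigcap_{i \in I} A_i|$.

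The crux is then to evaluate the size of an intersection $\bigcap_{i \in I} A_i$ and to show it depends only on $j = |I|$, together with counting how many index sets $I$ of size $j$ give a nonzero intersection. The hypothesis that overlapping $P$-hits force at least one to be $(P,C)$-straightened is exactly what rules out overlapping bad events: if $i, i' \in I$ with the two length-$c$ windows overlapping, then both cannot simultaneously be non-$(P,C)$-straightened hits, so $\bigcap_{i \in I} A_i$ is empty unless the $j$ chosen windows are pairwise non-overlapping. Thus the surviving index sets of size $j$ correspond to placing $j$ disjoint length-$c$ blocks among $n$ positions, and by a standard stars-and-bars / block-contraction argument the number of such placements is $\binom{n-cj+j}{j}$.

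Next I would compute $|\bigcap_{i \in I} A_i|$ for a fixed choice of $j$ pairwise disjoint windows. Using Lemma~\ref{lemletterorder} and the fact that the relative order of letters at distance $\ge c$ is forced, I expect the count to factor as follows. Contracting each of the $j$ disjoint length-$c$ blocks to a single slot leaves $n - cj + j$ positions, which can be filled in $(n-cj+j)!$ ways to determine the relative arrangement of the blocks among themselves and the remaining letters; then within each block the letters must form a $P$-hit that is non-$(P,C)$-straightened, and the number of internal arrangements that constitute such a ``bad'' hit is exactly $k = n! - |P|$ in the appropriate normalized sense (each part of $P$ contributes exactly one straightened representative, so the non-straightened arrangements across all $c!$ orderings number $c! - |P|$ per part-structure, aggregated to give the factor $k$). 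Combining, $|\bigcap_{i \in I} A_i| = (n-cj+j)! \cdot k^j / (\text{overcount})$, where the denominator $(n-cj)!/(n-cj+j)! \cdot$ corrections produce the stated $(n-cj+j)!^2 / (n-cj)!$ shape after multiplying by the $\binom{n-cj+j}{j}$ placements. Assembling the inclusion-exclusion sum then yields
$$f(n) = \sum_{j \ge 0} (-1)^j \binom{n-cj+j}{j} \frac{(n-cj+j)!\, k^j}{(n-cj)!} \cdot \frac{1}{\text{(symmetry factor)}} = \sum_{j \ge 0} \frac{(-1)^j (n-cj+j)!^2 k^j}{j!\,(n-cj)!},$$
where $\binom{n-cj+j}{j} = (n-cj+j)!/(j!\,(n-cj)!)$ supplies the $j!$ and reconciles the two block-factorial factors.

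The main obstacle I anticipate is pinning down the exact combinatorial factor $k^j$ and the factorial bookkeeping: specifically, justifying that ``non-$(P,C)$-straightened $P$-hit'' arrangements within each block contribute precisely the factor $k = n! - |P|$ rather than something depending on $n$ or on which block, and correctly reconciling the block-contraction count with the arrangement of letters inside versus outside the blocks so that the product telescopes to $(n-cj+j)!^2/(n-cj)!$. I would handle this by carefully distinguishing the choice of which letter \emph{values} land in each block (constrained by Lemma~\ref{lemletterorder}) from the choice of internal ordering (constrained to be a bad hit), and verifying the two factorial factors and the $k^j$ emerge cleanly; checking the formula against the base case $n < c$ (where the sum collapses to $n!$) and against the known $\{12,21\}$ or $\{123,321\}$ results would serve as a sanity check that the constant $k = n! - |P|$ and the window-contraction count are correctly normalized.
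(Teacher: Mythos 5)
Your proposal follows essentially the same route as the paper's proof: inclusion--exclusion over the positions of non-$(P,C)$-straightened $P$-hits, with the overlap hypothesis forcing the $j$ windows to be pairwise disjoint, a block-contraction count of $(n-cj+j)!$ for the skeleton, and $k^j$ for the internal arrangements (the paper merely parametrizes by the hits' smallest \emph{values} $\Gamma$ first and then the arrangement, rather than by positions first --- a symmetric decomposition giving identical factors). The one correction: no ``overcount'' or ``symmetry factor'' is needed, since for a fixed set of $j$ disjoint windows the intersection has size exactly $(n-cj+j)!\,k^j$, and multiplying by the $\binom{n-cj+j}{j}$ placements already yields the term $\frac{(n-cj+j)!^2\,k^j}{j!\,(n-cj)!}$.
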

\begin{proof}
Let $T_i$ be the number of $w\in S_n$ such that $w$ contains a $P$-hit
$h$ such that the first letter of $h$ is in position $i$ in $w$ and
$h$ is not $(P,C)$-straightened in $w$. By the inclusion-exclusion
principle,
$$f(n)=n! + \sum\limits_{j>0} (-1)^j \sum\limits_{S \subseteq [n],
  |S|=j} | \bigcap_{i \in S} T_i |.$$

Note that for any $P$-hit $h$ in $w \in S_n$ such that $h$ is not
$(P,C)$-straightened, there are exactly $n!-|P|$ possibilities for $h$
given the position of its first letter, the value of its smallest
letter, and whether $h$ is right polarized or left polarized. Let us
calculate the number of ways to construct $j$ non-overlapping $P$-hits
and place them in a permutation so that none of them are
$(P,C)$-straightened.

Let $\Gamma$ be the set containing exactly the $j$ letters that will
each be the smallest letter in one of our $j$ $P$-hits. We claim there
are $n-j(c-1) \choose j$ possibilities for $\Gamma$. Since no two
non-$(P,C)$-straightened $P$-hits can overlap, the possibilities are
exactly the sets of $j$ integers from $1$ to $n-c+1$ such that no two of
them are within $c-1$ of each other in value. These sets, in turn, are
in bijection with binary words containing $n-j(c-1)$ letters, $j$ of
which are ones. Indeed, given such a word $w$, we may substitute each
$1$ with the letter $1$ followed by $c-1$ copies of the letter $2$,
bringing us to $w'$. We then choose to stick the letter $i$ in
$\Gamma$ exactly when the $i$th letter of $w'$ is $1$. The inverse
bijection is easy to see.
 
Given $\Gamma$, the relative positions of the first letters of each of
the $j$ $P$-hits along with the letters not in any of the $j$ $P$-hits
can be chosen in $(n-j(c-1))!$ ways. This, in turn, determines which
hits are right polarized and left polarized. Given this choice, the
arrangement of each individual hit can be chosen from $n!-|P|$
possibilities. Hence $$\sum\limits_{S \subseteq [n], |S|=j} | \bigcap_{i
  \in S} T_i | = {n-j(c-1) \choose j} (n-j(c-1))! (n!-|P|)^j.$$

Inserting this into the formula found by the inclusion-exclusion
principle and slightly rearranging yields the desired result.
\end{proof}

\section{Some Immediate Applications of our Results} \label{secapplications}

In this section, we apply our results from Section \ref{secconfluence}
to all replacement partitions of $S_3$ (Theorem \ref{thmS_3case}), on
the way solving three open problems of Linton, Propp, Roby, and West
\cite{LPRW}. We then apply our results to a particularly interesting
infinite family of pattern-replacement equivalences (Theorem
\ref{thmeasyinf}).

\begin{thm}\label{thmS_3case}
(A) Let $P$ be a partition of $S_3$. Then there exists a straightening
  set $C$ of $P$ such that $P$ is $C$-confluent. (B) Furthermore, if
  $123$, $132$, and $213$ are not all in the same part of $P$ and
  $321$, $312$, and $231$ are not all in the same part of $P$, then
  the number of equivalence classes in $S_n$ under the $P$-equivalence
  can be established with Theorem \ref{thmconfcount}.
\end{thm}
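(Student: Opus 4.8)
The plan is to prove (A) by a case analysis on the possible partitions of $S_3$, using the three confluence-building tools (Proposition \ref{propsimpleconf}, Theorem \ref{thmcombineconf}, Theorem \ref{thmomni}) to assemble a suitable straightening set $C$ from the pieces of each partition. The key structural observation is that $S_3$ splits naturally according to leaning type: among the six permutations, one should determine which are right leaning, which are left leaning, and which are omni leaning, and then sort each part of $P$ accordingly. The idea is that the right-leaning elements of each part can be handled by a partition of (a subset of) $R_3$, the left-leaning elements by a partition of $L_3$, and the omni-leaning elements inserted one at a time. Concretely, I would first verify the leaning classification of the elements of $S_3$ directly from the definitions of tail size and leaning (this is a small finite check). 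Then, for a given partition $P$, I would split $P$ into its right-leaning sub-partition $P^R$ (a partition of a subset of $R_3$), its left-leaning sub-partition $P^L$ (a partition of a subset of $L_3$), and its omni-leaning elements.

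The next step is to apply Proposition \ref{propsimpleconf} to establish that $P^R$ is $C^R$-confluent for an appropriate choice $C^R$ (the lexicographically smallest right-leaning representative from each part), and symmetrically that $P^L$ is $C^L$-confluent. Verifying the hypothesis of Proposition \ref{propsimpleconf} amounts to checking that straightening one hit does not disturb the contiguity or the part-membership of another hit; for size-three patterns this is a bounded local check that I expect follows from Lemma \ref{lemoverlappinghit} together with the restriction to right-leaning (respectively left-leaning) hits, since overlapping hits of the same leaning are forced to behave coherently. Having secured confluence for $P^R$ and $P^L$ separately, I would invoke Theorem \ref{thmcombineconf} to conclude that their disjoint union is $(C^R \cup C^L)$-confluent. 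Finally, the omni-leaning elements of $S_3$ are folded in one at a time using Theorem \ref{thmomni}: each such element is either appended to an existing part (part (1) of that theorem, preserving the current straightening set) or made into a singleton part (part (2), enlarging the straightening set by that element). Iterating through the omni-leaning elements yields a straightening set $C$ for all of $P$ with $P$ being $C$-confluent, establishing (A).

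For (B), the goal is to discharge the overlap hypothesis of Theorem \ref{thmconfcount}, namely that whenever two $P$-hits overlap, at least one of them is already $(P,C)$-straightened. The claim is that the stated condition—that $\{123,132,213\}$ do not all lie in one part and $\{321,312,231\}$ do not all lie in one part—is exactly what rules out the bad overlap configurations. The strategy is to enumerate the ways two size-three hits can overlap in a permutation (they can share one or two positions), and to show via Lemma \ref{lemoverlappinghit} that overlapping hits must share leaning and be forward; I would then argue that a pair of overlapping non-straightened forward hits forces their underlying patterns to populate one of the two forbidden triples within a single part, contradicting the hypothesis. The main obstacle I anticipate is precisely this step in (B): carefully verifying that the combinatorial condition on the parts is equivalent to the absence of problematic overlaps, since it requires tracking how the relative order of the shared letters constrains which patterns of $S_3$ can simultaneously appear as overlapping hits, and confirming that forbidding the two length-three runs suffices to guarantee one hit in every overlapping pair is straightened. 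Once the overlap hypothesis is in hand, Theorem \ref{thmconfcount} applies directly and yields the enumeration formula, completing (B).
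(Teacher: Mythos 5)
Your proposal takes essentially the same route as the paper's proof: decompose any partition of $S_3$ into a partition of $R_3=\{123,132,213\}$ and a partition of $L_3=\{321,312,231\}$, establish confluence of each piece via Proposition \ref{propsimpleconf} with lexicographically smallest representatives, merge the pieces with Theorem \ref{thmcombineconf}, and obtain (B) by using Lemma \ref{lemoverlappinghit} to check that the overlap hypothesis of Theorem \ref{thmconfcount} can only fail when all of $R_3$ (or all of $L_3$) lies in a single part. The only differences are minor: the paper observes that $O_3=\emptyset$, which makes your Theorem \ref{thmomni} step vacuous, and your outlined verification for (B) is exactly the finite check that the paper asserts without writing out.
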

\begin{proof}
By Proposition \label{propsimpleconf}, $\{123, 132\}\{213\}$ is
$\{123, 213\}$-confluent, $\{123, 213\}\{132\}$ is $\{123,
132\}$-confluent), $\{213, 132\}\{123\}$ is $\{213, 123\}$-confluent,
and $\{123, 213, 132\}$ is $\{123\}$-confluent. (These are all the
partitions of $R_3$; note that $O_3$ is empty.) Note that only in the
final of these cases does the partition not satisfy the requirements
for Theorem \ref{thmconfcount} to be applied. By symmetry, similar
statements can be said for each partition of $L_3$. Since every
partition of $S_3$ is the disjoint union of a partition of $L_3$ and a
partition of $R_3$, (A) follows from Theorem
\ref{thmcombineconf}. Since two hits of size three of size $3$ in a
permutation $w$ can only overlap if either both are in $L_3$ or both
are in $R_3$ (Lemma \ref{lemoverlappinghit}), (B) follows as well.
\end{proof}

\begin{rem}
Note that Theorem \ref{thmS_3case} resolves three open problems,
the enumeration of equivalence classes under each of the $\{123,
132\}$-equivalence, the $\{123, 321\}$-equivalence, and the $\{123,
132, 321\}$-equivalence. It also makes progress on two additional open
problems, the enumerations of equivalence classes under the $\{123,
132, 213\}$-equivalence and $\{123, 132, 213, 321\}$-equivalence; in
each case, Theorem \ref{thmS_3case} allows one to characterize a
set of permutations ($(P,C)$-root permutations), exactly one of which
appears in each equivalence class in $S_n$.
\end{rem}

\begin{thm}\label{thmeasyinf}
Let $a_1,a_2,a_3,\ldots,a_k \in L_c \cup O_c$ and
$b_1,b_2,b_3,\ldots,b_k \in R_c \cup O_c$ such that $a_i \neq b_j$ for
all $i,j$. Let $P$ be the partition $\{a_1, b_1\} \{a_2, b_2\} \cdots
\{a_k, b_k\}$. The number of equivalence classes in $S_n$ under the
$P$-equivalence is
$$\sum\limits_{j \ge 0} \frac{(-1)^j(n-cj+j)!^2 k^j}{j!(n-cj)!}.$$
\end{thm}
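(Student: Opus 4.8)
The plan is to reduce Theorem~\ref{thmeasyinf} to an application of Theorem~\ref{thmconfcount} by establishing two things about the partition $P=\{a_1,b_1\}\{a_2,b_2\}\cdots\{a_k,b_k\}$: first, that $P$ is $C$-confluent for some straightening set $C$, and second, that the overlap condition required by Theorem~\ref{thmconfcount} holds. Once these are in place, the stated formula is exactly the output of Theorem~\ref{thmconfcount} with $|P|=k$, so no new computation is needed. The work is entirely in verifying the hypotheses.

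First I would build $P$ up as a disjoint union in the sense of the definition preceding Theorem~\ref{thmcombineconf}. Partition the index set $\{1,\dots,k\}$ according to where each $a_i$ and $b_i$ lives: each part $\{a_i,b_i\}$ has $a_i\in L_c\cup O_c$ and $b_i\in R_c\cup O_c$. I would separate out the left-leaning and right-leaning content and treat the omni-leaning elements via Theorem~\ref{thmomni}. Concretely, let $J$ be the partition of the left-leaning elements among the $a_i$ (one singleton per such element) and $K$ the partition of the right-leaning elements among the $b_i$. By Proposition~\ref{propsimpleconf}, a partition of $R_c$ into singletons is trivially confluent with respect to the straightening set consisting of those same permutations, since $(P,C)$-straightening a singleton hit does nothing; the hypothesis of Proposition~\ref{propsimpleconf} is vacuously satisfied because straightening never rearranges any letters. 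The analogous statement holds for $L_c$ by the left-right symmetry noted in the proof of Theorem~\ref{thmS_3case}. Theorem~\ref{thmcombineconf} then combines these into a $C$-confluent partition whose parts are $\{a_i,b_i\}$ for those $i$ with $a_i$ left leaning and $b_i$ right leaning.

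The remaining elements are the omni-leaning ones: an index $i$ may have $a_i\in O_c$, or $b_i\in O_c$, or both. I would feed these in one at a time using Theorem~\ref{thmomni}. Part~(1) of that theorem lets me add an omni element to an existing part, and part~(2) lets me add it as a new singleton part. By iterating, every pair $\{a_i,b_i\}$ is accounted for: a pair with one omni member and one leaning member is formed by starting from the singleton leaning part and invoking Theorem~\ref{thmomni}(1); a pair with two omni members is formed by first creating a singleton via Theorem~\ref{thmomni}(2) and then augmenting it via Theorem~\ref{thmomni}(1). At the end of this process the partition is precisely $P$, and it is $C$-confluent for the resulting straightening set $C$, which contains exactly one representative from each part as required.

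Finally I would check the overlap condition of Theorem~\ref{thmconfcount}: whenever two $P$-hits $h$ and $h'$ in some $w\in S_n$ overlap, at least one is $(P,C)$-straightened. By Lemma~\ref{lemoverlappinghit}, overlapping hits are both forward and both right leaning or both left leaning, and in particular an omni-leaning hit can never overlap another hit. So the only overlaps come from hits forming leaning permutations in $L_c$ or in $R_c$, and these are governed exactly by the confluence of the constituent singleton partitions $J$ and $K$; since each such part is a singleton, any hit forming such a permutation is already $(P,C)$-straightened, so the condition holds automatically. With both hypotheses verified and $|P|=k$, Theorem~\ref{thmconfcount} yields the formula. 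I expect the main obstacle to be bookkeeping: stating the disjoint-union decomposition and the inductive insertion of omni elements carefully enough that the straightening sets line up and $C$ genuinely remains a straightening set at every stage.
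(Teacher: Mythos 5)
Your construction of the confluence itself is sound, and it is in fact a somewhat different and more careful route than the paper's: the paper simply applies Theorem \ref{thmcombineconf} directly to the singleton partitions $\{a_1\}\{a_2\}\cdots\{a_k\}$ and $\{b_1\}\{b_2\}\cdots\{b_k\}$ with the omni elements left in place, whereas you combine only the genuinely leaning elements and then insert the omni ones via Theorem \ref{thmomni}(1) and (2), which stays closer to the stated hypotheses of those results. One local complaint: your verification of the overlap condition is garbled as written, since the parts of $P$ are pairs, not singletons. The correct argument is nearby, though: by Lemma \ref{lemoverlappinghit} overlapping hits are both forward and both left leaning or both right leaning; every leaning element of $P$ is the \emph{unique} left-leaning (or right-leaning) element of its part and hence lies in the straightening set $C$; and a forward hit whose pattern lies in $C$ is by definition $(P,C)$-straightened.

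The genuine gap is in the final invocation of Theorem \ref{thmconfcount}. That theorem requires $P$ to be a partition of \emph{all} of $S_c$, and its formula is stated in terms of the quantity $n!-|P|$ (which, from the counting in its proof, must be read as $c!-|P|$: a hit of size $c$ with fixed letter values admits only $c!$ arrangements), where $|P|$ is the number of parts. Your $P$ partitions only a $2k$-element subset of $S_c$, so the hypothesis fails outright, and your assertion that the formula follows ``with $|P|=k$'' does not type-check: substituting $|P|=k$ into the theorem produces $(c!-k)^j$ in the sum, not $k^j$. The missing idea --- which is precisely the last sentence of the paper's proof --- is to extend $P$ to a partition $P'$ of all of $S_c$ by declaring each permutation of $S_c$ not among the $a_i,b_i$ to be its own singleton part. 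This changes nothing substantive: a hit forming a singleton pattern admits no nontrivial rearrangement, so the equivalence relation and the straightening operator are unchanged (hence confluence survives), and such hits are always straightened (hence the overlap condition survives). But it fixes the bookkeeping, since $|P'| = k + (c!-2k) = c!-k$, giving $c!-|P'| = k$, so that Theorem \ref{thmconfcount} yields exactly the claimed formula. Executed literally, your version proves a different, incorrect formula; with the extension step inserted, it becomes a complete proof.
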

\begin{proof}
It is trivial that $\{a_1\}\{a_2\}\{a_3\}\cdots \{a_k\}$ is $\{a_1,
a_2, a_2, \ldots, a_k\}$-confluent and that $\{b_1\} \{b_2\} \{b_3\}
\cdots \{b_k\}$ is $\{b_1,b_2,b_3,\ldots, b_k\}$-confluent. By Theorem
\ref{thmcombineconf}, $\{a_1, b_1\} \{a_2, b_2\} \cdots \{a_k, b_k\}$
(which we will refer to as $P$) is $\{a_1, b_1, a_2, b_2, a_3, b_3,
\ldots, a_k, b_k\}$-confluent. Observe that if two $P$-hits overlap,
they either form $a_i$ and $a_j$ for some $i,j$, or they form $b_i$
and $b_j$ for some $i,j$ (by Lemma \ref{lemoverlappinghit}). In either
case, both $P$-hits are forward and thus $(P, \{a_1, b_1, a_2, b_2,
a_3, b_3, \ldots, a_k, b_k\})$-straightened. Thus, pretending that all
the permutations in $S_c$ not in $P$ are each in a part of size one of
$P$, we can apply Theorem \ref{thmconfcount} to obtain the desired
formula.
\end{proof}

\begin{rem}
Observe that Theorem \ref{thmeasyinf} counts equivalence classes in
$S_n$ under the $P$ equivalence for at least $\sum\limits_{j} { c!/2
  \choose j } {c!/2 \choose j} j!$ choices of $P \subseteq S_c$ for a
given $c$. This is easy to see by allowing only half of the omni
leaning permutations in $S_c$ to be assigned to the $a_i$ and allowing
only the other half to be assigned to the $b_i$ (so the choices for
$a_i$ come from exactly half of the permutations in $S_c$, and
similarly for $b_i$).
\end{rem}

\begin{rem}
It should be surprising that equivalence relations with different
class structures often have the same number of classes in $S_n$.

For example, by Theorem \ref{thmS_3case}, for all partitions $P$
of $S_3$ of the form $\{a,b\} \{c,d\} \{e\} \{f\}$ the $P$-equivalence
yields the same enumeration. Yet it is easy to see that the structure
of the equivalence classes created under such equivalence differs
greatly between equivalence relations.

In fact, Theorem \ref{thmeasyinf} provides an infinite number of
examples of this phenomenon.
\end{rem}

\section{The $S_c$-equivalence}\label{secSc}

One infinite family of equivalence relations in particular, is worth
studying in greater depth. The $S_c$ equivalence has strictly fewer
equivalence classes than any equivalence defined using another
replacement partition of $S_c$. Consequently, enumerating the
equivalence classes under the $S_c$-equivalence is an important
tasks. In this section, we make progress on this by completely
characterizing the equivalence classes. In the case of the $\{12,
21\}$-equivalence ($c=2$), Stanley \cite{S12} enumerated and
characterized the equivalence classes in $S_n$ and found
representative elements for each one. Surprisingly, in our work, we
will find a set of representative elements (the $(S_c, \{123\cdots c,
c \cdots 321\})$-root permutations) that in the case of the $\{12,
21\}$-equivalence differ from the already found one.

\begin{defn}
We will refer to the permutation $123 \cdots c$ as $\alpha$ and to the
permutation $c(c-1)(c-2)\cdots 1$ as $\hat{\alpha}$.
\end{defn}

\textbf{Example:} When $c=3$, the $S_c$-equivalence is the $\{123,
132, 213, 231, 312, 321\}$-equivalence, $\alpha=123$, and
$\hat{\alpha}=321$.

\begin{lem}\label{lemstayinR}
For any two $S_c$-hits $h$ and $h'$ in a permutation $w \in S_n$ such
that $h'$ forms a permutation in $R_c$, $(R_c, \{ \alpha \}
)$-straightening $h$ leaves the letters in $h'$ still forming a
permutation in $R_c$.
\end{lem}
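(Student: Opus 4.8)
The plan is to split on whether the positions occupied by $h$ and $h'$ are disjoint or overlap. Throughout, recall that $(R_c,\{\alpha\})$-straightening acts only on $R_c$-hits and replaces the letters of $h$ by the same multiset rearranged to form $\alpha=12\cdots c$; in particular it only permutes letters among the positions of $h$ and alters nothing outside those positions. Hence if the positions of $h$ and $h'$ are disjoint, every letter of $h'$ keeps both its value and its position, so $h'$ forms exactly the same permutation of $R_c$ as before and there is nothing to prove. It remains to treat the case in which $h$ and $h'$ overlap. Since the leaning properties are not left--right symmetric, I would run the argument in each of the two orientations ($h$ to the left of $h'$, and $h$ to the right), describing the first, the second being entirely analogous.

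It is convenient to use the following reformulation: a length-$c$ word with distinct letters lies in $R_c$ exactly when it has a \emph{split point}, i.e.\ an index $k<c$ such that every letter in the first $k$ positions is smaller than every letter in the remaining $c-k$ positions (this is just the statement that, for some $k<c$, the first $k$ positions hold the $k$ smallest values). First I would invoke Lemma \ref{lemoverlappinghit}: since $h$ and $h'$ overlap and $h'$ forms a permutation in $R_c$, both hits are forward and right leaning, so in particular $h$ is right leaning and, being forward, is right polarized. Writing the overlap as the right part of $h$ and the left part of $h'$, straightening $h$ fills the overlapping positions with the largest letters of $h$ in increasing order, while the non-overlapping private tail of $h'$ is untouched; note that the first letter of this private tail is exactly the right neighbor of $h$. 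Thus the new $h'$ is an increasing block (the top letters of $h$) followed by the unchanged private tail, and I must exhibit a split point for this word.

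The key step is to show that straightening $h$ cannot strand a small letter of $h'$'s private tail at the right end of $h'$ --- precisely the failure that would make the new $h'$ skew-decomposable, i.e.\ left leaning rather than right leaning. I would argue this from the two structural facts available: the split point of the original $h'$ pins down where its small letters sit relative to the overlap, while the forwardness (right polarization) of $h$, whose relevant neighbor is the first private-tail letter of $h'$, constrains that letter from below in terms of the values of $h$; when the closer neighbor of $h$ lies on its left instead, the forward condition supplied by Lemma \ref{lemoverlappinghit} for $h'$ must be brought in to play the same role. Combining these, one shows that the small letters of $h$ that straightening sweeps out of the overlap are never smaller than a tail letter that remains trapped on the right, so the split point of the original $h'$, possibly shifted by a bounded amount, still separates small from large in the new $h'$. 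The main obstacle is exactly this last comparison: converting the average-based forward condition into the order-statistic inequality needed for a surviving split, and verifying it uniformly over the possible locations of the original split point of $h'$ (inside the overlap versus inside the private tail). Once this inequality is established, the split point exhibits the new $h'$ as right leaning, which completes the overlapping case and hence the lemma.
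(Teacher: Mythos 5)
Your disjoint case, your use of Lemma \ref{lemoverlappinghit}, and your description of what straightening does to the overlap are all fine, but the step you yourself flag as "the main obstacle" is a genuine gap, and the route you propose for closing it cannot work. Forwardness/right polarization of $h$ only compares the \emph{average} of $h$'s letters with the single adjacent letter closest to that average; it says nothing about the other letters of $h'$'s private tail, so it cannot yield the uniform order-statistic comparison you need (every tail letter of $h'$ exceeding every letter placed into the overlap). Worse, your framing presupposes that straightening $h$ may "sweep" letters into or out of the overlap; if that could actually happen, the new $h'$ would in general fail to have consecutive values and hence would not form \emph{any} permutation, let alone one in $R_c$ --- a requirement of the lemma that your sketch never addresses (you only hunt for a split point).

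The missing idea is purely about values, not polarization. Since $h$ and $h'$ are both hits, each has a value set that is an interval of $c$ consecutive integers, and the private part of $h$, the overlap, and the private part of $h'$ have pairwise disjoint value sets. A short counting argument on these two intervals forces (in the right-leaning case, with $h$ to the left of $h'$ and offset $d$) the private part of $h$ to consist of exactly the $d$ smallest values of $h$, the overlap to consist of exactly the $c-d$ largest values of $h$, which are simultaneously the $c-d$ smallest values of $h'$, and the private tail of $h'$ to consist of exactly the $d$ values above $\max(h)$; the case with $h$ to the right is the mirror image. Consequently, sorting $h$ leaves \emph{the same set} of letters in the overlap positions, merely rearranged increasingly, so the effect on $h'$ is exactly to sort a contiguous subword of it: its value set (hence hit-ness) is unchanged, and its tail size stays below $c$. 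This is precisely the content of the paper's one-line proof --- rearranging a contiguous subword of a right-leaning hit into increasing order yields a right-leaning hit --- and once this value structure is in hand, no appeal to forwardness or polarization is needed at all.
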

\begin{proof}
This is because rearranging the letters of a contiguous subword of a
right leaning $S_c$-hit to be in increasing order will still yield a
right leaning $S_c$-hit.
\end{proof}

\begin{lem}\label{lemhitmaintaining}
For any two $S_c$-hits $h$ and $h'$ in a permutation $w \in S_n$,
$(S_c, \{\alpha, \hat{\alpha}\})$-straightening $h$ leaves the letters
in $h'$ still forming a permutation in $S_c$
\end{lem}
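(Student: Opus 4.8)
The plan is to split on whether the hits $h$ and $h'$ share a position, and to reduce the interesting (overlapping) case to Lemma~\ref{lemstayinR} together with its mirror image.

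First I would dispose of the case where $h$ and $h'$ are disjoint. A $(S_c, \{\alpha, \hat\alpha\})$-straightening of $h$ only permutes the values occupying the positions of $h$, so every letter of $h'$ keeps its position; in particular the letters of $h'$ remain contiguous and continue to form the same permutation in $S_c$. The degenerate case $c = n$, where the only window is $h = h' = w$, is likewise immediate, since after the straightening $w$ is still a permutation in $S_c$.

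For the overlapping case I would invoke Lemma~\ref{lemoverlappinghit}, which forces $h$ and $h'$ to be both forward and either both right leaning or both left leaning. The key observation I would establish is that on such a hit the $(S_c, \{\alpha, \hat\alpha\})$-straightening coincides with the one-part straightening of Lemma~\ref{lemstayinR}. Indeed, polarization depends only on the position of $h$ and on the average value $a$ of its letters, both of which are unchanged by any rearrangement of $h$; hence a forward right leaning hit must be right polarized, since a left polarized right leaning hit is backward. Rearranging such an $h$ into increasing order yields $\alpha$, which is right leaning and right polarized, hence forward and in $\{\alpha, \hat\alpha\}$, so it is $(S_c, \{\alpha, \hat\alpha\})$-straightened; rearranging instead into decreasing order would give the backward $\hat\alpha$, which is not straightened because it can be made forward. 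Thus $(S_c, \{\alpha, \hat\alpha\})$-straightening $h$ sends it to increasing order, which is exactly the effect of $(R_c, \{\alpha\})$-straightening $h$ (applicable because $h$ forms a permutation in $R_c$). Since $h'$ also forms a permutation in $R_c$, Lemma~\ref{lemstayinR} then shows the letters of $h'$ still form a permutation in $R_c \subseteq S_c$.

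The left leaning subcase I would handle by the mirror image of this argument: a forward left leaning hit is left polarized, and $(S_c, \{\alpha, \hat\alpha\})$-straightening sends it to decreasing order $\hat\alpha$, matching the reflected version of Lemma~\ref{lemstayinR} obtained by reversing the ambient permutation (reversal interchanges left and right leaning and turns decreasing order into increasing order), so the letters of $h'$ still form a permutation in $L_c \subseteq S_c$. The only step that requires genuine care is verifying that the full $S_c$-straightening agrees with the single-part $R_c$- (respectively $L_c$-) straightening on overlapping hits; once this coincidence is in place the conclusion is just Lemma~\ref{lemstayinR} and the reflection symmetry, while the disjoint case is trivial.
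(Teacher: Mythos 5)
Your proof is correct and follows essentially the same route as the paper: reduce via Lemma~\ref{lemoverlappinghit} to the overlapping case where both hits are forward and (WLOG) right leaning, then conclude with Lemma~\ref{lemstayinR} and the left/right reflection symmetry. The extra work you do---verifying that $(S_c,\{\alpha,\hat\alpha\})$-straightening a forward right leaning hit coincides with rearranging it into increasing order---is exactly the step the paper leaves implicit, so your write-up is just a more detailed version of the same argument.
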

\begin{proof}
By lemma \ref{lemoverlappinghit}, we only need to consider the case
where $h$ and $h'$ are either both left leaning or both right
leaning. Without loss of generality, we can assume they are both right
leaning (and since they overlap, are both in $R_c$). The Lemma thus
follows from Lemma \ref{lemstayinR}.
\end{proof}

\begin{thm}\label{thmScrep}
The $S_c$-equivalence is $( \alpha, \hat{\alpha})$-confluent.
\end{thm}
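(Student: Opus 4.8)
The plan is to build the single-part partition $\{S_c\}$ out of pieces for which the earlier machinery already delivers confluence, rather than attacking the Diamond Lemma directly. First I would handle the right-leaning half. Let $\{R_c\}$ denote the partition of $R_c$ consisting of the one part $R_c$. Its lexicographically smallest element is $\alpha=123\cdots c$, so the straightening set supplied by Proposition \ref{propsimpleconf} is $\{\alpha\}$. The hypothesis of that proposition---that $(P,C)$-straightening one hit cannot destroy the $R_c$-membership of another hit forming a permutation in the part---is precisely the content of Lemma \ref{lemstayinR}. Hence $\{R_c\}$ is $\{\alpha\}$-confluent.

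Next I would obtain the left-leaning analogue by symmetry. The reversal map $w\mapsto w_n w_{n-1}\cdots w_1$ is an involution of $S_n$ that interchanges left and right leaning, interchanges left and right polarized, preserves the forward/backward and omni-leaning distinctions, and carries $\alpha$ to $\hat{\alpha}=c(c-1)\cdots 1$. Consequently it conjugates the $(\{R_c\},\{\alpha\})$-straightening operator into the $(\{L_c\},\{\hat{\alpha}\})$-straightening operator, and since confluence is a structural property transported along this isomorphism, the single-part partition $\{L_c\}$ of $L_c$ is $\{\hat{\alpha}\}$-confluent.

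With both halves in hand I would invoke Theorem \ref{thmcombineconf} on $P=\{R_c\}$ (which is $\{\alpha\}$-confluent) and $Q=\{L_c\}$ (which is $\{\hat{\alpha}\}$-confluent). Pairing the unique part of $P$ with the unique part of $Q$ realizes their disjoint union as the one-part partition $\{R_c\cup L_c\}$, so with $D=\{\alpha\}\cup\{\hat{\alpha}\}=\{\alpha,\hat{\alpha}\}$ the theorem yields that $\{R_c\cup L_c\}$ is $\{\alpha,\hat{\alpha}\}$-confluent. Finally I would reinstate the omni-leaning permutations: enumerating $O_c=\{o_1,o_2,\ldots\}$, I apply Theorem \ref{thmomni}(1) repeatedly, each time adjoining the next $o_i$ (which lies outside the current set) to the single part, each application preserving $\{\alpha,\hat{\alpha}\}$-confluence. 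Once all of $O_c$ has been absorbed the single part equals $R_c\cup L_c\cup O_c=S_c$, so $\{S_c\}$ is $\{\alpha,\hat{\alpha}\}$-confluent, which is the assertion. (When $O_c$ is empty, as for $c\le 3$, this last stage is simply vacuous.)

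The genuinely delicate step is the reversal symmetry of the second paragraph: one must verify that the \emph{straightening operator itself}---not merely the notions of leaning and polarization---is honestly conjugated into its mirror image, so that the transfer of confluence from $\{R_c\}$ to $\{L_c\}$ is licensed rather than merely plausible. Everything else is a matter of confirming that the hypotheses of Proposition \ref{propsimpleconf}, Theorem \ref{thmcombineconf}, and Theorem \ref{thmomni} are literally satisfied---in particular that $\alpha$ really is the lexicographically least right-leaning permutation, and that the disjoint-union bookkeeping collapses the two single parts into one.
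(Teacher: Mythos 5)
Your proposal is correct and follows essentially the same route as the paper's first proof: Lemma \ref{lemstayinR} plus Proposition \ref{propsimpleconf} for the right-leaning part, symmetry for the left-leaning part, Theorem \ref{thmcombineconf} to merge them, and Theorem \ref{thmomni} to absorb the omni-leaning permutations. Your write-up merely makes explicit the reversal-map conjugation and the iteration of Theorem \ref{thmomni}, which the paper leaves implicit.
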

We provide two proofs of this.
\begin{proof}
By Lemma \ref{lemstayinR} and Proposition \ref{propsimpleconf}, the
$R_c$-equivalence is $\{ \alpha \}$-confluent. By symmetry, the
$L_c$-equivalence is $\{\hat{\alpha}\}$-confluent. It follows from
Theorem \ref{thmcombineconf} that the $(L_c \cup R_c)$-equivalence is
$\{\alpha ,\hat{\alpha} \}$-confluent. It follows from Theorem
\ref{thmomni} that the $S_c$-equivalence is $\{\alpha ,\hat{\alpha}
\}$-confluent.
\end{proof}

The following alternative proof is an interesting application of the
Diamond Lemma.
\begin{proof}
Let $\rightarrow$ be the $(S_c, \{\alpha,
\hat{\alpha}\})$-straightening operator.

Observe that $(S_c, \{\alpha, \hat{\alpha}\})$-straightening a
$S_c$-hit in a permutation $w$ does not take any other $S_c$-hits that
were $\{\alpha, \hat{\alpha}\})$-straightened and rearrange them to
not be so. (Observation (1))

As a consequence of Observation (1), $\{\alpha,
\hat{\alpha}\})$-straightening a $S_c$-hit in a permutation $w$
results in a permutation $w'$ with strictly more $\{\alpha,
\hat{\alpha}\})$-straightened $S_c$-hits. Since the number of such
hits a permutation can have is bounded, $\rightarrow$ terminates.

As an additional consequence of Observation (1) and Lemma
\ref{lemhitmaintaining}, the actions of $(S_c, \{\alpha,
\hat{\alpha}\})$-straightening the $S_c$-hit beginning in position $i$
commutes with the action of $(S_c, \{\alpha,
\hat{\alpha}\})$-straightening the $S_c$-hit in position $i'$ for any
given $i$ and $i'$. Thus $\rightarrow$ satisfies condition (2) of the
Diamond Lemma. By the Diamond Lemma, $\rightarrow$ is confluent.
\end{proof}

\begin{thm}\label{thmsumclasses}
For a given equivalence class of $S_n$ under the $S_c$-equivalence,
let $w$ be the unique $( \{S_c\}, \{ \alpha, \hat{\alpha}\})$-root
permutation in the class. (Such a $w$ exists by Theorem
\ref{thmScrep}.) Then $w$ is of the form $v_1v_2\cdots v_k$ such that
\begin{enumerate}
\item each $v_i$ is an increasing or decreasing word of consecutive
  integers.
\item for all $u \equiv w$, $u$ is of the form $v'_1v'_2 \cdots v'_k$
  such that $v'_i \equiv v_i$.
\end{enumerate}
\end{thm}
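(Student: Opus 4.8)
The plan is to read the structure of $w$ directly off the confluence and then to show that the block decomposition it induces is rigid across the whole equivalence class. Since $w$ is the $(\{S_c\},\{\alpha,\hat\alpha\})$-root of its class, no $(S_c,\{\alpha,\hat\alpha\})$-straightening applies to it, so every $S_c$-hit in $w$ is already $(S_c,\{\alpha,\hat\alpha\})$-straightened and hence forms $\alpha$ or $\hat\alpha$; I record the consequence that every window of $c$ consecutive positions whose values are $c$ consecutive integers is arranged monotonically. For statement (1) I then define $v_1,\dots,v_k$ to be the maximal runs of consecutive positions across which the value changes by exactly $+1$ (respectively $-1$) at every step. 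Because the letters are distinct, such a unit-step run cannot reverse direction internally, so each $v_i$ is genuinely an increasing or decreasing word of consecutive integers, which is (1); moreover the cut between $v_i$ and $v_{i+1}$ necessarily sits at a \emph{value break}, a pair of adjacent positions whose values differ by at least $2$.

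For statement (2) I would induct on the number of $S_c$-rearrangements needed to pass from $w$ to $u$, maintaining the invariant that the restriction of the current permutation to the position interval occupied by $v_i$ is a permutation of the value set of $v_i$; this is exactly the assertion that the current permutation has the form $v'_1\cdots v'_k$ with each $v'_i$ occupying the positions of $v_i$ and carrying its value set. The base case is $u=w$. For the inductive step it suffices to prove that, whenever the invariant holds, every $S_c$-hit is contained in a single block: such a hit is then merely an $S_c$-rearrangement of some $v'_i$, so it preserves all other blocks and replaces $v'_i$ by an equivalent word, maintaining both the invariant and the desired conclusion $v'_i\equiv v_i$.

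The heart of the proof, and the step I expect to be the main obstacle, is therefore showing that \emph{no $S_c$-hit straddles a block boundary}. In $w$ itself this is immediate, since a hit forms $\alpha$ or $\hat\alpha$ and so has adjacent letters differing by exactly $1$, which is incompatible with the value break of size at least $2$ at every boundary. The difficulty is that in a general $u\equiv w$ the blocks have been internally permuted, so the two letters flanking a boundary need no longer differ by more than $1$, and a priori a window of $c$ consecutive positions straddling the boundary might assemble $c$ consecutive values out of the two neighbouring value sets. To forbid this I would invoke Lemma~\ref{lemletterorder}: for positions $i,j$ with $|i-j|\ge c$ the relative order of the two letters is invariant under the equivalence, which confines the letters that lie numerically between two position-adjacent blocks to their own, positionally distant, blocks. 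Combining this confinement with the fact that each block occupies an interval of consecutive integer values, I would show that the values in any boundary-straddling window must omit an integer strictly between their extremes, so the window cannot form $c$ consecutive values and is not a hit.

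Once this exclusion holds for every member of the class, the equivalence restricted to the class factors as a product over the blocks, and statement (2) follows. I would expect the bookkeeping around the short blocks — those of length less than $c$, which carry no internal hits and function only as spacers — to require the most care, since for a window spanning three or more consecutive blocks the order invariance supplied by Lemma~\ref{lemletterorder} must be applied simultaneously to several far-apart pairs in order to rule out a short consecutive run forming across the spacers.
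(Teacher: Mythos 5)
Your outline for part (1) and the overall induction scheme for part (2) match the paper's proof, but the mechanism you propose for the crucial step --- that no $S_c$-hit in any $u\equiv w$ straddles a block boundary --- has a genuine gap, and it sits exactly where you predicted trouble. The only properties of the root that your argument invokes are: every hit in $w$ forms $\alpha$ or $\hat\alpha$, every block boundary of $w$ is a value break of size at least $2$, the positional/value-set invariant holds in $u$, and Lemma~\ref{lemletterorder} constrains letters at distance at least $c$. These hypotheses do not imply your key claim. Take $c=3$ and $w=34512$, with blocks $v_1=345$ and $v_2=12$: the only hit in $w$ is $345$, which forms $\alpha$; the boundary pair $5,1$ is a value break; and $u=45312$ (obtained from $w$ by the legal rearrangement $345\mapsto 453$) satisfies your invariant, since positions $1$--$3$ carry $\{3,4,5\}$ and positions $4$--$5$ carry $\{1,2\}$. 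Yet the window in positions $3$--$5$ of $u$ is the hit $312$: its values $\{1,2,3\}$ are consecutive, it straddles the boundary, and rearranging it to $123$ gives $45123$, destroying the invariant (and the conclusion of the theorem). Lemma~\ref{lemletterorder} is satisfied throughout: the only constrained pairs are $(1,4)$, $(1,5)$, $(2,5)$, and their relative orders are preserved. Of course $34512$ is not a root --- its hit $345$ is right leaning but left polarized, hence backward, and rearranging it to $543$ makes it forward, so it is not $(S_3,\{123,321\})$-straightened; the true root of this class is $54321$. But nothing in your proposed proof of the key step distinguishes $34512$ from a genuine root, since all the facts you plan to use hold for it. So the step you call the heart of the proof is, as you plan to prove it, false: the polarization conditions in the definition of a root must enter the argument essentially, and value breaks plus order-invariance cannot substitute for them.

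The paper closes this gap by a different mechanism, which is the piece missing from your proposal: by Theorem~\ref{thmScrep} (confluence), repeated $(S_c,\{\alpha,\hat\alpha\})$-straightenings carry any $u\equiv w$ back to the root $w$, and by Lemma~\ref{lemhitmaintaining}, straightening one hit leaves the letters of any other hit still forming a hit. Hence if some $u$ in the class had a hit whose letters include values from both $v_i$ and $v_{i+1}$, that same letter set would still form a hit after every straightening step, so $w$ itself would contain a hit with values from both blocks --- impossible, since in $w$ those values occupy the positions of $v_i$ and $v_{i+1}$ and no hit of $w$ crosses a boundary (your value-break observation, or the paper's minimality-of-$k$ argument, both give this). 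In other words, instead of excluding the straddling hit inside $u$ directly, the paper pulls it back to the root and derives a contradiction there; this is precisely where the full strength of being a root is used, and it is why the argument correctly fails for $34512$: straightenings from $45312$ lead to $54321$, not to $34512$.
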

\begin{proof}
Pick $v_1, v_2, \ldots, v_k$ such that $w=v_1v_2\dots v_k$, such that
each $v_i$ is an increasing or decreasing word of consecutive
integers, and such that $k$ is minimal. (A little thinking shows that
there is only one such pick.) Observe that in $w$, there is no
$S_c$-hit $h$ containing letters from both $v_i$ and $v_{i+1}$ for any
$i$. If there were, then the $S_c$-hit would have to be $(S_c,
\{\alpha, \hat{\alpha}\})$-straightened (as are all $S_c$-hits in
$w$). But if this were the case, then $v_i$ and $v_{i+1}$ would
concatenate to form an increasing or decreasing word of consecutive
integers, a contradiction because we picked $v_1, v_2, \ldots, v_k$ to
minimize $k$.

By Lemma \ref{lemhitmaintaining} and Theorem \ref{thmScrep}, it
follows that for $u \equiv w$, there are no $S_c$-hits containing both
letters from $v_i$ and from $v_{i+1}$ for any $i$. If there were, then
since repeated $(S_c, \{\alpha, \hat{\alpha}\})$-straightenings can
bring us from $u$ to $w$, $w$ would have to contain a $S_c$-hit
containing both letters from $v_i$ and $v_{i+1}$, a contradiction.
\end{proof}

\begin{defn} 
Given $w \in S_n$, we define $v_i(w)$ to be the unique $v_i$ defined
in the proof of Theorem \ref{thmsumclasses}.
\end{defn}

Observe that Stanley found a result similar to Theorem
\ref{thmsumclasses} in the case of the $\{12\}\{21\}$-equivalence
\cite{S12}.

By the previous theorem, all that remains in characterizing the
equivalence classes in $S_n$ under the $S_c$-equivalence is to
characterize the equivalence class containing $123\cdots t$ and the
one containing $t(t-1)\dots 1$ for all $t$; by symmetry, it is
sufficient to characterize the equivalence class containing the
identity permutation in $S_n$, $123\cdots n$.

\begin{defn} 
Let $w$ be a word of size $n$, each letter of which has a distinct
value. Recall that the \textbf{tail size} of $w$ is the smallest
positive integer $k$ such that the first $k$ letters of $w$ contain
the $k$ smallest letter values in $w$.

We say that if $w$ is empty, it has no \textbf{irreducible blocks}.

Otherwise, let $k$ be the tail size of $w$, let $a$ be a word made of
the first $k$ letters of $w$, and let $b$ be a word made of the final
$n-k$ letters of $w$. We say that the \textbf{irreducible blocks} of
$w$ are $B_1(w)=a$ and $B_i(w)=B_{i-1}(b)$ for $i>1$ satisfying that
$B_{i-1}(b)$ exists.
\end{defn}

\begin{defn}
A permutation is $c$-toothed if the following is true.
\begin{enumerate}
\item For each $B_i(w)$, $|B_i(w)|<c$ (where $|B_i(w)|$ is the number
  of letters in $B_i(w)$);
\item there is some sequence of consecutive integers $I=i_1, \ldots,
  i_t$ such that $\sum_{i \in I} |B_i(w)| =c.$
\end{enumerate}
\end{defn}

\textbf{Example:} Figure \ref{fig3} is a visual depiction of the irreducible block
decomposition of the permutation $w=3124657$. Note that $|B_1(w)|=3$,
$|B_2(w)|=1$, $|B_3(w)|=2$, and $|B_4(w)|=1$. Consequently, $w$ is
$3$-toothed, $4$-toothed, and $7$-toothed. However, $w$ is not
$2$-toothed since $|B_1(w)|>2$.
\begin{figure}\label{fig3}
\begin{center}
\includegraphics[scale=1.3]{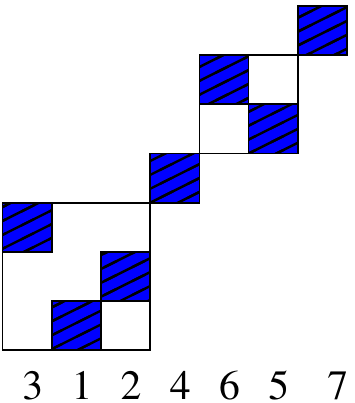}
\end{center}
\caption{Irreducible block decomposition of $3124657$.}
\end{figure}

\begin{thm}\label{thmtooth}
Let $w \in S_n$. Then $w \equiv \id_n$ exactly if $w$ is
$c$-toothed\footnote{We follow the convention that $\id_n =123\cdots
  n$.}.
\end{thm}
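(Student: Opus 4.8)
The plan is to prove both directions of the biconditional by connecting the $c$-toothed condition to the existence of non-straightened $S_c$-hits, using the characterization of the identity's equivalence class via $(S_c, \{\alpha, \hat\alpha\})$-root permutations established in Theorem \ref{thmScrep}. The key observation is that, by Theorem \ref{thmScrep}, $w \equiv \id_n$ if and only if repeated $(S_c, \{\alpha, \hat\alpha\})$-straightenings bring $w$ to $\id_n$, since $\id_n$ is manifestly the root permutation of its own class (it has no non-straightened hits). So the real content is: $w$ can be straightened all the way down to $\id_n$ precisely when $w$ is $c$-toothed.

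\smallskip

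First I would unwind the irreducible block decomposition. The tail-size definition says $B_1(w)$ is the shortest prefix containing the smallest letters, and the blocks partition $w$ into segments each of which is ``self-contained'' in the sense that the letters in $B_1 B_2 \cdots B_i$ are exactly the $|B_1| + \cdots + |B_i|$ smallest values. The crucial structural fact I would establish is that an $S_c$-hit in $w$ — that is, a contiguous length-$c$ subword whose values are consecutive integers — must consist of a consecutive run of complete irreducible blocks summing to exactly $c$ letters; this is essentially forced because a hit's values form a set of $c$ consecutive integers, and block boundaries occur exactly where an initial segment of values is completed. This is why condition (2) of $c$-toothed (some consecutive run of blocks sums to $c$) is exactly the condition that an $S_c$-hit exists, and condition (1) ($|B_i(w)| < c$) ensures each block is genuinely smaller than a full hit, so that straightening a hit actually merges blocks and makes progress.

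\smallskip

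For the forward direction ($w \equiv \id_n \Rightarrow w$ is $c$-toothed), I would argue contrapositively or via the straightening dynamics: if $w$ is not $c$-toothed, then either some block has size $\ge c$ or no consecutive run of blocks sums to $c$. I would show that in the first case $w$ already contains no $S_c$-hit straddling block boundaries in a reducible way, and in the second case $w$ has no $S_c$-hit at all consistent with straightening to $\id_n$, so $w$ is stuck (it is its own root) and cannot equal $\id_n$ unless $w = \id_n$ — which I would check is itself $c$-toothed for the relevant trivial reason, or handle separately. For the reverse direction ($w$ is $c$-toothed $\Rightarrow w \equiv \id_n$), I would induct on $n$ or on the number of inversions: by condition (2) there is an $S_c$-hit occupying a run of blocks summing to $c$; straightening it to increasing order (toward $\alpha$) strictly decreases the permutation lexicographically and, by the block-merging observation, yields a word whose block structure is simpler. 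I would verify the result of straightening is still $c$-toothed (or directly that it is closer to $\id_n$), so that repeated straightening terminates at $\id_n$, giving $w \equiv \id_n$ by Theorem \ref{thmScrep}.

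\smallskip

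\textbf{The hard part} will be the structural lemma identifying $S_c$-hits with consecutive runs of complete irreducible blocks of total length $c$, and showing that straightening such a hit interacts cleanly with the block decomposition — specifically that it merges exactly those blocks and preserves (or simplifies) the toothed structure of the remainder. The subtlety is that the irreducible blocks are defined recursively via tail sizes, so I would need to argue carefully that a hit cannot partially overlap a block (it must contain whole blocks), which follows from the fact that both hits and block boundaries are governed by initial segments of consecutive values being completed. Once this alignment between hits and blocks is pinned down, both directions of the biconditional reduce to bookkeeping about which runs of blocks sum to $c$, and the confluence result from Theorem \ref{thmScrep} does the rest of the work in guaranteeing a well-defined descent to $\id_n$.
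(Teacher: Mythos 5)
Your reverse direction ($c$-toothed $\Rightarrow w \equiv \id_n$) follows the same skeleton as the paper's proof: align hits with runs of whole irreducible blocks, straighten toward $\alpha$, check toothedness is preserved, and let Theorem \ref{thmScrep} finish. Even there, two details remain open in your sketch: you must rule out straightening a hit to $\hat{\alpha}$ rather than $\alpha$ (this holds because in a $c$-toothed permutation every letter to the left of a hit is smaller, and every letter to the right is larger, than the hit's letters, so every hit is right polarized when $n>c$), and you must exhibit, in every non-identity $c$-toothed permutation, a hit that is \emph{not already} increasing; the hit handed to you by condition (2) may form $\alpha$ and give no progress (in $12354$ with $c=3$ the run $123$ is useless and you need $354$ instead), so termination ``at $\id_n$'' requires the separate observation, also asserted without proof in the paper, that a $c$-toothed permutation all of whose hits form $\alpha$ is the identity.

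The genuine gap is in your forward direction ($w \equiv \id_n \Rightarrow w$ is $c$-toothed). First, your structural lemma is false for general $w$: in $w=23415$ with $c=3$, the hit $234$ lies strictly inside the irreducible block $2341$, so hits need not consist of complete blocks; the lemma is valid only once condition (1) is already known (and condition (1) must be read as $|B_i(w)|\le c$, as the paper's own example $3124657$ with $|B_1|=3$ shows; under the strict inequality the theorem itself would fail, e.g.\ $3214\equiv \id_4$). Second, and more seriously, your claim that a non-$c$-toothed $w$ is ``stuck (it is its own root)'' is false: $w=652341$ with $c=3$ is a single block of size $6$, hence not $3$-toothed, yet its hit $234$ is right leaning and left polarized, hence backward and not $(S_3,\{123,321\})$-straightened, and straightening it carries $w$ to $654321\neq w$. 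So non-toothed permutations can contain hits and need not be their own roots, and your case analysis collapses. The repair --- presumably what the paper's ``straightforward to check'' is hiding --- is an invariance argument that never mentions roots: in a $c$-toothed permutation every hit spans whole blocks, so a single $S_c$-rearrangement preserves all block boundaries outside the hit's window and keeps that window a union of blocks summing to $c$, hence preserves $c$-toothedness; and since any rearrangement is undone by a rearrangement of the same window, $c$-toothedness is constant on each equivalence class. As $\id_n$ is $c$-toothed, so is every permutation equivalent to it.
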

\begin{proof}
Observe that every $c$-toothed permutation in $S_n$ except for the
identity contains at least one $S_c$-hit not forming $\alpha$. Also
note that $( S_c, \{\alpha, \hat{\alpha}\})$-straightening a $S_c$-hit
in a $c$-toothed permutation in $S_n$ is exactly the same as
rearranging the $S_c$-hit to form $\alpha$. Finally, observe that
rearranging a $S_c$-hit in a $c$-toothed permutation to form $\alpha$
yields another $c$-toothed permutation. Thus it follows from Theorem
\ref{thmScrep} that every $c$-toothed permutation in $S_n$ is
equivalent to $\id_n$ under the $S_c$-equivalence.

It is straightforward to check that every permutation in $S_n$ that is
equivalent to $\id_n$ under the $S_c$-equivalence must also be
$c$-toothed.
\end{proof}

\begin{defn}
 Let $T_{c,n}$ be the set of $c$-toothed permutations in $S_n$. Let
 $T_c$ be the set $\{ |T_{c,j}| : j>0\}$.
\end{defn}

 It follows from Theorem \ref{thmtooth} and Theorem
 \ref{thmsumclasses} that the size of any equivalence class under the
 $S_c$-equivalence is a product of elements of $T_c$. Consequently, it
 is interesting to study $|T_{c,n}|$. We do so for $c=3$.

\begin{thm}
For $n\ge 3$, $|T_{3,n}|$ is the value of the coefficient of $x^n$ in
the generating function
$$\dfrac{x}{1-x-x^2-3x^3}-\dfrac{1}{1-x^2}.$$
\end{thm}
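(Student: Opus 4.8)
**

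The plan is to set up a combinatorial decomposition of $3$-toothed permutations according to their irreducible block structure, translate that decomposition into a system of generating functions, and extract the coefficient.

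First I would recall the two conditions defining a $c$-toothed permutation for $c=3$: every irreducible block $B_i(w)$ has size strictly less than $3$ (so each block has size $1$ or $2$), and there is some run of consecutive blocks whose sizes sum to exactly $3$. Since each block has size in $\{1,2\}$, the second condition says that somewhere in the block sequence either three consecutive blocks of sizes $(1,1,1)$ appear, or two consecutive blocks with sizes $(1,2)$ or $(2,1)$ appear (these are the only ways to write $3$ as a sum of consecutive block-sizes drawn from $\{1,2\}$). The key structural fact is that a permutation is determined by its sequence of irreducible-block sizes together with, for each block of size $2$, a choice of whether that block is increasing or decreasing — a size-$1$ block is forced, and a size-$2$ block of consecutive integers can be $i(i{+}1)$ or $(i{+}1)i$. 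Thus counting $3$-toothed permutations in $S_n$ reduces to counting sequences of blocks, where each size-$1$ block contributes weight $x$ and each size-$2$ block contributes weight $2x^2$ (the factor $2$ for the two orientations), subject to the divisibility-by-consecutive-run constraint and subject to all blocks having size $\le 2$.

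Next I would compute the generating function for \emph{all} permutations whose blocks each have size $1$ or $2$ (ignoring condition (2)); this is the quasi-geometric series $\sum_{k\ge 0}(x + 2x^2)^k = \frac{1}{1 - x - 2x^2}$ tracking any number of blocks. The tooth condition (2) is then imposed by inclusion--exclusion: I would subtract off those block-sequences in which \emph{no} consecutive run of blocks sums to $3$. Forbidding a consecutive block-sum of $3$ amounts to a transfer-matrix / finite-automaton condition on the sequence of block sizes, since whether a prefix can extend a partial run to $3$ depends only on a bounded amount of recent history (the size of the previous block and whether we are one short of $3$). I would build the corresponding $2\times 2$ or $3\times 3$ transfer matrix on states recording the trailing block pattern, weight its entries by $x$ or $2x^2$, and read off the generating function $G(x)$ for tooth-free sequences as a rational function; the generating function for $3$-toothed permutations would then be $\frac{1}{1-x-2x^2} - G(x)$ up to the bookkeeping of the empty and very short cases. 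I expect this to simplify, after clearing denominators, to $\frac{x}{1-x-x^2-3x^3} - \frac{1}{1-x^2}$, and I would verify the match by confirming the denominator factorization and checking the first several coefficients against direct enumeration of $T_{3,n}$ for small $n$ (using the worked example $w=3124657$ and similar hand checks).

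The main obstacle, I expect, is correctly handling the ``forbidden consecutive run summing to $3$'' condition, because consecutive runs can start at any block and the constraint is global rather than purely local to a single block. Getting the transfer-matrix states exactly right — so that a run of sizes $(1,1,1)$, $(1,2)$, and $(2,1)$ are all correctly excluded while no legitimate sequence is over-counted — is the delicate combinatorial heart of the argument, and it is also where an off-by-one in the small-$n$ regime (reflected in the correction term $-\frac{1}{1-x^2}$, which plausibly accounts for the purely-increasing and the block-size-$2$-only families that satisfy the size bound but fail or trivially meet the tooth condition) is most likely to hide. Once the automaton is correct, the remaining rational-function algebra and coefficient extraction is routine.
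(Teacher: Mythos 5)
Your setup misreads the irreducible block structure in two ways, and both are fatal to the computation. First, you read condition (1) of the $c$-toothed definition as a strict inequality and so allow only blocks of sizes $1$ and $2$. The theorem actually requires blocks of size up to $3$: the worked example $w=3124657$, which you propose to check against, has $|B_1(w)|=3$ and is declared $3$-toothed, and Theorem \ref{thmtooth} forces this reading too (in $S_3$ every permutation is equivalent to $123$ under the $S_3$-equivalence, so $321$, a single block of size $3$, must be $3$-toothed). The three irreducible blocks of size $3$ --- the patterns $231$, $312$, $321$ --- are precisely what produce the $3x^3$ term in the denominator $1-x-x^2-3x^3$; with blocks of size at most $2$ that term cannot arise. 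Second, you weight a size-$2$ block by $2x^2$ on the grounds that it ``can be $i(i+1)$ or $(i+1)i$.'' This is false: an increasing pair has tail size $1$ and splits into two blocks of size $1$, so an irreducible block of size $2$ is necessarily decreasing and carries weight $x^2$ (you may be conflating irreducible blocks with the increasing-or-decreasing runs $v_i$ of Theorem \ref{thmsumclasses}). Consequently your unrestricted generating function $\frac{1}{1-x-2x^2}$ is wrong on both counts, and no transfer-matrix correction for the ``no run summing to $3$'' condition can repair it: its dominant singularity (at $x=1/2$) already differs from that of the claimed answer, so the final step where you ``expect this to simplify'' to the stated series would simply fail.

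Moreover, the two steps you defer are exactly where the content lies, and the paper does both concretely and more simply than your plan. Counting by the last block gives $F_n=F_{n-1}+F_{n-2}+3F_{n-3}$ (one choice each for a final block of size $1$ or $2$, three choices for size $3$), and no transfer matrix is needed for the subtraction: a block of size $3$ by itself is a run summing to $3$, so a tooth-free permutation has only blocks of sizes $1,2$ with no consecutive pattern $(1,2)$, $(2,1)$, or $(1,1,1)$; for $n\ge 3$ this forces the all-$2$'s sequence, i.e.\ the single permutation $2143\cdots n(n-1)$ for $n$ even, whence the correction $\frac{1}{1-x^2}$. (Your guess that the purely increasing permutations are also subtracted is wrong: the identity has block sizes $(1,1,1,\ldots)$ and is $3$-toothed for $n\ge3$.) One further caution about your proposed coefficient check: carried out honestly with a correct setup, it reveals that the displayed formula itself is shifted by one --- the recurrence needs $F_0=1$, giving $\frac{1}{1-x-x^2-3x^3}-\frac{1}{1-x^2}$, which correctly yields $|T_{3,3}|=6$ and $|T_{3,4}|=10$, whereas the printed series with the extra factor of $x$ gives $2$ and $5$. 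So coefficient checking is indeed essential, but your plan as written never reaches a correct formula to check.
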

\begin{proof}
Let $F_n$ be the number of permutations in $S_n$ comprising only
irreducible blocks of size $\leq 3$. If the final irreducible block is
of size $1$, the final letter of such a permutation is $n$. If the
final irreducible block is of size $2$, the final two letters are
$n(n-1)$. If the final irreducible block is of size $3$, the final
three letters are $n(n-1)(n-2)$, or $n(n-2)(n-1)$, or
$(n-1)n(n-2)$. Thus $F_n=F_{n-1}+F_{n-2}+3F_{n-3}$ for $n>1$,
$F_{n<1}=0$, and $F_1=1$. Thus if $F$ is the generating function
satisfying $[x^n]F=F_n$, then $F=xF+x^2F+3x^3F+x$. Rearranging
yields $$F=\dfrac{x}{1-x-x^2-3x^3}.$$

Now consider the permutations in $S_n$ comprising only of irreducible
blocks of size $\leq 3$ but containing no consecutive irreducible
blocks whose sizes add to $3$. For $n\ge 3$, it is easy to see that
such a permutation can only exist when $n$ is even and that it is of
the form $21436587\cdots n(n-1)$ (comprising only of irreducible
blocks of size two). Thus for $n\ge 3$, $|T_{3,n}|$ is the value of
the coefficient of $x^n$ in the generating function
$$F-\dfrac{1}{1-x^2}=\dfrac{x}{1-x-x^2-3x^3}-\dfrac{1}{1-x^2}.$$
\end{proof}

\section{Conclusion and Directions of Future Work}

We present three directions of future work.

\begin{enumerate}
\item Is it interesting to extend the notion of confluence introduced
  in this paper to the case where instead of $(P,C)$-straightening
  $P$-hits, one $(P,C)$-straightens contiguous subwords of some fixed
  size. When this size of $c$, this reduces to the notion of
  $(P,C)$-straightening we have used in this paper.
\item Is there a formula counting the number of classes in $S_n$ under
  the $S_c$-equivalence? Such a formula would provide a lower bound
  for the number of equivalence classes under an arbitrary doubly
  adjacent pattern-replacement equivalence.
\item Is there a formula for $|T_{c,n}|$? Such a formula is of
  interest since the size of any equivalence class under the
  $S_c$-equivalence is a product of elements of $T_c= \{ T_{c,j} :
  j>0\}$.
\end{enumerate}

\newpage
\pagestyle{empty}\singlespace

\end{document}